\newtheorem{Theorem}{Theorem}[section]
\newtheorem{Corollary}[Theorem]{Corollary}
\newtheorem{Lemma}[Theorem]{Lemma}
\newtheorem{Proposition}[Theorem]{Proposition}
\newtheorem{Definition}[Theorem]{Definition}
\begin{document}

\title{Dynamical systems associated to the $\beta$-core in the repeated prisoner's dilemma}
\author{S\l awomir Plaskacz\\
\small Faculty of Mathematics and Computer Science\\
\small N. Copernicus University in Toru\'n\\
\small e-mail: plaskacz@mat.umk.pl
\and Joanna Zwierzchowska\\
\small Faculty of Mathematics and Computer Science\\
\small N. Copernicus University in Toru\'n\\
\small e-mail: joanna.zwierzchowska@mat.umk.pl}
\date{}
 \maketitle
{\bf Mathematics Subject Classification (2000)}\\ Primary: 91A20; Secondary: 91A10, 91A05

 \vspace{3mm}

{\bf Key-words : }repeated Prisoner's Dilemma, beta core, Smale's good strategies, semi-cooperative strategies.

 \vspace{3mm}

{\bf Abstract : }
We consider the repeated prisoner's dilemma (PD). We assume that players make their choices knowing only average payoffs from the previous stages. A player's strategy is a function from the convex hull $\mathfrak{S}$ of the set of payoffs into the set $\{C,\,D\}$ ($C$ means cooperation, $D$ -- defection). S. Smale in \cite{smale} presented an idea of good strategies in the repeated PD. If both players play good strategies then the average payoffs tends to the payoff corresponding to the profile $(C,C)$ in PD. We adopt the Smale idea to define semi-cooperative strategies - players do not take as a referencing point the payoff corresponding to the profile $(C,C)$, but they can take an arbitrary payoff belonging to the $\beta$-core of PD. We show that if both players choose the same point in the $\beta$-core then the strategy profile is an equilibrium in the repeated game. If the players choose different points in the $\beta$-core then the sequence of the average payoffs tends to a point in $\mathfrak{S}$. The obtained limit can be treated as a payoff in a new game. In this game the set of players' actions is the set of points in $S$ that corresponds to the $\beta$-core payoffs.

\section{Introduction}
Robert Aumann in papers \cite{aumann20}, \cite{aumann21}, \cite{aumanncore}, \cite{aumannsurvey} showed that if a payoff $p$ of a normal form game corresponds to a strategy profile belonging to the $\beta$-core then there exists a strong equilibrium in the repeated game providing the payoff $p$. The construction of the strong equilibrium profile in the repeated game is rather complex and bases on the assumption that all players know the full history of the game. Our aim is to consider also the situation when players choose strategies   in the repeated game corresponding to different points in the $\beta$-core.  Then, in general,  the course of the game seems  heavy to forecast.

We consider two players Prisoner's Dilemma (PD) with payoffs given by
\begin{equation}
\begin{array}{c|c|c}
 & \mbox{C} & \mbox{D} \\
\hline
\mbox{C} & (2,2) & (0,3)\\
\hline
\mbox{D} & (3,0) & (1,1)
\end{array}\label{staticPD}
\end{equation}
where $C$ means \textit{to cooperate} and $D$ -- \textit{to defect}. The set of $\beta$-core payoffs for PD is presented in Figure \ref{beta} (bold segments with ends $(1,\,2.5),\,(2,\,2),\,(2.5,\,1)$).
\begin{figure}[!ht]
  \centering
    \includegraphics[width=0.5\textwidth]{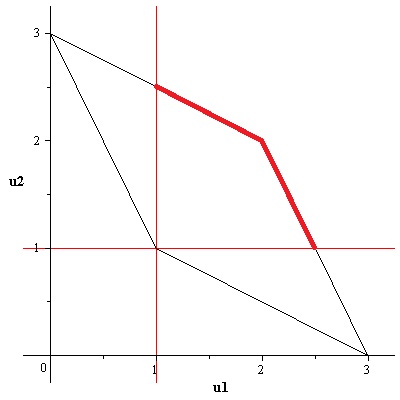}
  \caption{The $\beta$-core of the prisoner's dilemma}
\label{beta}
\end{figure}

The $\beta$-core consists of payoffs which are Pareto optimal and individually rational. We assume that in the repeated game players know only both players average payoffs from the previous stages. So, a player's strategy is a function from the convex hull $S$ of the set of vector payoffs into the set of his actions. The vector payoff function $u$ given by (\ref{staticPD}), the strategy profile $s:S\to\{C,\,D\}^2$ and an initial point $\bar{x}_1$ determine a sequence of average payoffs $\bar{x}_t$ by
\begin{equation}\label{sequence}
\bar{x}_{t+1}=\frac{t\bar{x}_t+u(s(\bar{x}_t))}{t+1}
\end{equation}
The strategies of player 1 and player 2  corresponding to a point $v=(v_1,\,v_2)$ in the $\beta$-core are presented in Figure \ref{semi}.
\begin{figure}
  \centering
  \includegraphics[width=0.8\textwidth]{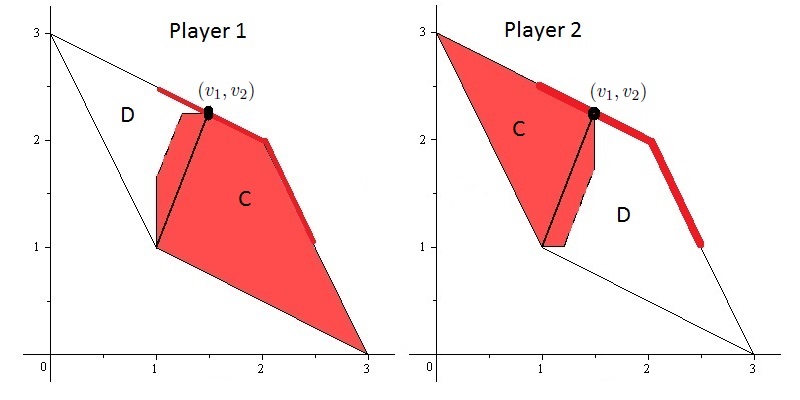}
 \caption{The semi-cooperative strategies}
\end{figure}\label{semi}
The strategies are called semi-cooperative strategies and are determined by the point $v$ and a positive constant $\varepsilon$. We show that for an arbitrary initial point $\bar{x}_1$, the sequence of average payoffs $\bar{x}_t$ is convergent to the point $v$ when the strategy profile $s=s^v$ consists of the semi-cooperative strategies corresponding to the point $v$. The profile $s^v$ is a Nash equilibrium. The case $v=(2,\,2)$ was considered by S. Smale in \cite{smale}. The idea of semi-cooperative strategies is motivated by Smale's idea of good strategies.

The main problem, that we consider in the paper, is to study the limit properties of the dynamical system given by (\ref{sequence}) in the case when players 1 and 2 choose different points $v$ in the $\beta$-core -- player 1 chooses point $a$ and player 2 chooses point $b$.
Our main result formulated in Theorem \ref{twzbiega} states that for an arbitrary initial point the sequence of average payoffs is convergent and the limit is given by
\[
\overline{x}_t\quad \xrightarrow[t\rightarrow \infty]{} \quad \left\{
\begin{array}{lcl}
a & \mbox{ if } & a_1=b_1  \\
(2,2) & \mbox{ if } & a_1 \leqslant 2 \leqslant b_1  \\
b&\mbox{ if } & a_1<b_1\leq 2 \\
a &\mbox{ if } & 2 \leq  a_1<b_1 \\
\approx (1,1) &\mbox{ if } & b_1<a_1
\end{array}\right.
\]
The Smale approach to the repeated PD has been recently applied in \cite{akin}. E. Akin showed that if player's 1 strategy $s_1$ is simple, i.e.
\[
s_1(x)=\left\{\begin{array}{ll}
D&\mbox{if $L(x)>0$}\\
C&\mbox{if $L(x)<0$}
\end{array}\right.
\]
where $L(x_1,x_2)=ax_1+bx_2+c$ is an affine map such that $L(1,1),\,L(3,0)\leq 0\leq L(2,2),\,L(0,3)$, then every sequence of average payoffs is attracted by the interval $\{x\in S:\;L(x)=0\}$, for arbitrary strategy of player 2. If both players adopt simple strategies then every sequence of average payoffs tends to a point being the intersection of separation lines. In \cite{akin}, the evolutionary dynamics is used to analyze competition among certain simple strategies. The game with the payoff given by Theorem \ref{twzbiega} has a continuous strategy set. In the next paper we intend to analyze its replicator dynamics using methods presented in \cite{rr}.

\section{Smale's good strategies in the repeated prisoner's dilemma}

In this section we provide a brief presentation of Smale's approach to the repeated prisoner's dilemma which was presented in  $\cite{smale}$.

Smale considers PD with payoffs  given by (\ref{staticPD}).
The players' actions are interpreted as follows: $C$ means \textit{to cooperate} and $D$ -- \textit{to defect}\footnote{Originally, in the paper $\cite{smale}$, Smale understands the game $(\ref{staticPD})$ in the meaning of the arms race. In his paper the action $C$ is marked with $E$ -- easy -- which means to disarm, and the action $D$ is marked with $T$ -- tough -- means to arm.}. The game is symmetric and the action $D$ dominates the action $C$ for each player ($3>2$ and $1>0$). The Nash equilibrium is the pair of action $(D,D)$ and the Nash payoff is $(1,1)$. The Nash payoff is not Pareto optimal. The Pareto frontier contains two segments: the first one is jointing $(0,3)$ to $(2,2)$, the second one -- $(2,2)$ to $(3,0)$. Smale distinguishes one Pareto optimal payoff $(2,2)$.

He constructs a strategy profile in the repeated PD that is a Nash equilibrium with the payoff equals to $(2,\,2)$. This kind of result can be treated as a special case of the Folk Theorem. What makes Smale's approach not typical  is the way of choosing actions in each repetition. At each stage, the players make their decision basing on the average vector payoffs from the previous repetitions. It means that the domain of strategies is no longer the set of histories but now it is the convex hull of the payoffs. Such strategies are called \textit{memory strategies}. Each player chooses his memory strategy before the iterated game is started. Players' strategies are fixed during the iteration.

Let the function $u:\{C,D\}^2\rightarrow \mathfrak{S} $ be given by $(\ref{staticPD})$ where  $\mathfrak{S}$ denotes the convex hull of all possible payoffs, i.e.  $\mathfrak{S}=conv\{(2,2),$ $(0,3),(3,0),(1,1)\}$.
A \textit{memory strategy} of  player $i$ is a map $s_i:\mathfrak{S} \rightarrow \{C,D\}$. A \textit{strategy profile} is the pair $s=(s_1,s_2):\mathfrak{S} \rightarrow \{C,D\}^2$. The strategy profile $s$ and an initial point $x_1\in \mathfrak{S}$  determines the course of the repeated game in the following way:
\begin{displaymath}
\overline{x}_1:=x_1,\;\;\; x_{t+1}:=u(s(\overline{x}_t)),\;\;\;\overline{x}_{t+1}:=\frac{x_1+\cdots+x_{t+1}}{t+1} \mbox{ for }t\geqslant 1.
\end{displaymath}
The sequence $(x_t)_{t\geqslant 1}$ is the sequence of payoffs  and the sequence $(\overline{x}_t)_{t\geqslant 1}$ is the sequence of average payoffs in the repeated game.

Fix $\varepsilon>0$. A \textit{good strategy} of player 1 is a map $s_1^*: \mathfrak{S} \rightarrow \{C,D\}$ given by
\[
s_1^*(a,b) =\left\{\begin{array}{ll}
C &\mbox{ if } b<a+\varepsilon \mbox{ and } a\geq 1 \mbox{ and } b \leq 2\\
D & \mbox{ elsewhere in } S
\end{array}\right.
\]
A \textit{good strategy} of player 2 is a map  $s_2^*: \mathfrak{S} \rightarrow \{C,D\}$ given by
\[
s_2^*(a,b) =\left\{\begin{array}{ll}
C &\mbox{ if } a<b+\varepsilon \mbox{ and } b\geq 1 \mbox{ and } a \leq 2\\
D & \mbox{ elsewhere in } S
\end{array}\right.
\]
The good strategies are illustrated on Figure $\ref{ryswypldyl}$.
\begin{figure}[!ht]
  \centering
    \includegraphics[width=0.8\textwidth]{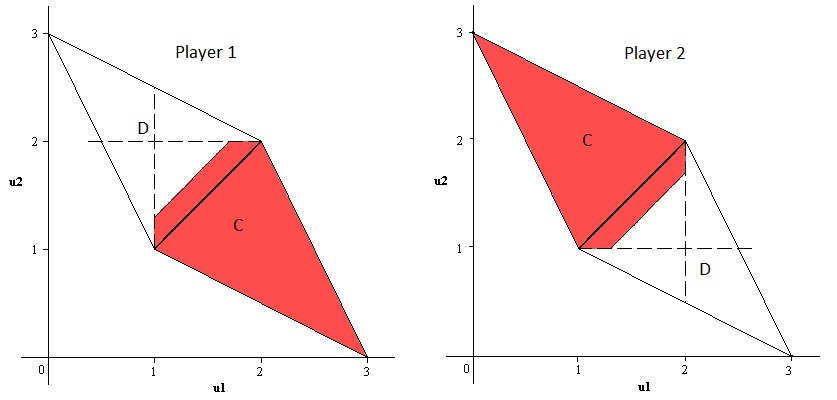}
  \caption{Good strategies}
\label{ryswypldyl}
\end{figure}

The main result presented in  section 1 of  $\cite{smale}$ is the following theorem.
\begin{Theorem}[Smale]\label{twSmale}
\begin{enumerate}
\item If player 1 plays a good strategy $s_1^*$ and player 2 plays an arbitrary strategy $s_2$ then the sequence of average payoffs
$\overline{x}_t =(\overline{x}^1_t,\overline{x}^2_t  )$ satisfies
\[\displaystyle
\liminf_{t\rightarrow \infty } \overline{x}^1_t \geqslant 1
\qquad \mbox{ and }\qquad
\limsup_{t\rightarrow \infty } \overline{x}^2_t \leqslant 2
\]
for every $x_1\in\mathfrak{S}$.
\item If both players play good strategies $s_1^*, s_2^*$, then
\[\displaystyle
\overline{x}_t \xrightarrow[t\rightarrow \infty]{} (2,2)
\]
for every $x_1\in\mathfrak{S}$.
\end{enumerate}
\end{Theorem}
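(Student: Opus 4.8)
The whole argument rests on one elementary fact about ``self-correcting'' Cesàro averages, which I would isolate first. Suppose $(b_t)_{t\ge2}\subset[0,M]$, $\overline{y}_1\in[0,M]$ and $\overline{y}_{t+1}=\frac{t\overline{y}_t+b_{t+1}}{t+1}$; fix $\alpha\in[0,M]$. \emph{(i)} If for all large $t$ one has $b_{t+1}\ge\alpha$ whenever $\overline{y}_t<\alpha$, then $\liminf_t\overline{y}_t\ge\alpha$; \emph{(ii)} symmetrically, if $b_{t+1}\le\alpha$ whenever $\overline{y}_t>\alpha$, then $\limsup_t\overline{y}_t\le\alpha$. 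For (i): if $\overline{y}_t<\alpha-\delta$ then $\overline{y}_{t+1}-\overline{y}_t=\frac{b_{t+1}-\overline{y}_t}{t+1}>\frac{\delta}{t+1}$, so by divergence of $\sum 1/t$ the sequence cannot remain below $\alpha-\delta$; and once $\overline{y}_s\ge\alpha-\delta$ with $s$ large enough that $\alpha/(s+1)<\delta$, the estimates $\overline{y}_{t+1}\ge\frac{t}{t+1}\overline{y}_t$ (when $\overline{y}_t\ge\alpha$) and $\overline{y}_{t+1}>\overline{y}_t$ (when $\alpha-\delta\le\overline{y}_t<\alpha$, since then $b_{t+1}\ge\alpha$) keep $\overline{y}_t\ge\alpha-\delta$ forever; let $\delta\to0$. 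Part (ii) is (i) applied to $M-\overline{y}_t$.

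For the first statement, the clause ``$a\ge1$'' in $s_1^*$ forces player~1 to defect whenever $\overline{x}^1_t<1$, and under $D$ his stage payoff is $1$ or $3$; thus $x^1_{t+1}\ge1$ whenever $\overline{x}^1_t<1$, and the lemma (with $\alpha=1$, $M=3$) gives $\liminf\overline{x}^1_t\ge1$. The clause ``$b\le2$'' forces player~1 to defect whenever $\overline{x}^2_t>2$, whence player~2's stage payoff is $0$ or $1$, and the lemma (with $\alpha=2$) gives $\limsup\overline{x}^2_t\le2$. The same reasoning applied to player~2 (note $s_2^*(a,b)=s_1^*(b,a)$ and that $u$ is symmetric) gives $\liminf\overline{x}^2_t\ge1$ and $\limsup\overline{x}^1_t\le2$.

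For the second statement I would use two further structural facts. First, $\mathfrak S$ is forward invariant: $x_{t+1}$ is always a vertex of $\mathfrak S$ and $\overline{x}_{t+1}$ is a convex combination of $\overline{x}_t\in\mathfrak S$ and $x_{t+1}$, so $2\overline{x}^1_t+\overline{x}^2_t\ge3$ and $\overline{x}^1_t+2\overline{x}^2_t\ge3$ for all $t$ --- in particular the two averages are never both below $1$. Second, on $R:=[1,2]^2\cap\{|a-b|<\varepsilon\}$ both players cooperate, so $x_{t+1}=(2,2)$, $\overline{x}_{t+1}$ stays in the square, and $|\overline{x}^1_{t+1}-\overline{x}^2_{t+1}|=\frac{t}{t+1}|\overline{x}^1_t-\overline{x}^2_t|$; hence $R$ is forward invariant and the orbit converges to $(2,2)$ once it enters $R$. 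By the first statement applied to both players, $1\le\liminf\overline{x}^i_t\le\limsup\overline{x}^i_t\le2$, so the orbit eventually lies in $Q:=[1-\eta,2+\eta]^2$ for any fixed small $\eta>0$. I would then show $w_t:=\overline{x}^1_t-\overline{x}^2_t\to0$: a case check on $Q\cap\mathfrak S$ shows that for large $t$, $w_t\ge0$ forces $d_{t+1}:=x^1_{t+1}-x^2_{t+1}\le0$ and $w_t\le0$ forces $d_{t+1}\ge0$ --- e.g.\ the profile $(D,C)$, for which $d=3$, requires player~1 to defect, i.e.\ $\overline{x}^2_t\ge\overline{x}^1_t+\varepsilon$ or $\overline{x}^1_t<1$ or $\overline{x}^2_t>2$, and when $w_t\ge0$ the first is absurd, the second contradicts invariance of $\mathfrak S$, and the third makes both averages exceed $2$ so that player~2 defects too, ruling out $(D,C)$. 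Since $d_{t+1}\in\{-3,0,3\}$, applying both halves of the lemma to the shifted sequence $w_t+3\in[0,6]$ with $\alpha=3$ gives $\liminf w_t\ge0\ge\limsup w_t$, i.e.\ $w_t\to0$.

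Finally I would bootstrap the mean $\mu_t:=\tfrac12(\overline{x}^1_t+\overline{x}^2_t)$ (for which $\limsup\mu_t\le2$ already) up to $2$ in two applications of the lemma, the relevant input being the average $\tfrac12(x^1_{t+1}+x^2_{t+1})$ of the two coordinates of the stage payoff (the ``stage mean''). If $\mu_t<\tfrac32$ and $t$ is large, then $w_t\to0$ excludes any average exceeding $2$ (it would force $\mu_t>2-o(1)$), so by invariance of $\mathfrak S$ either all averages lie in $[1,2]$ (profile $(C,C)$, stage mean $2$) or exactly one is below $1$, say $\overline{x}^1_t<1<\overline{x}^2_t$, giving profile $(D,C)$ (stage mean $\tfrac32$); in either case the stage mean is $\ge\tfrac32$, so the lemma yields $\liminf\mu_t\ge\tfrac32$. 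Now $\liminf\mu_t>1$ together with $w_t\to0$ puts both averages eventually above $1$, so failing to lie in $[1,2]^2$ would require some average above $2$, again forcing $\mu_t\ge2-o(1)$; hence for every $\rho>0$, $\mu_t<2-\rho$ implies (for $t$ large) profile $(C,C)$ and stage mean $2$, so the lemma gives $\liminf\mu_t\ge2-\rho$. Letting $\rho\to0$ gives $\mu_t\to2$, so $\overline{x}^i_t=\mu_t\pm\tfrac12 w_t\to2$, i.e.\ $\overline{x}_t\to(2,2)$. The real obstacle here is that the attracting region $R$ meets the boundary of $\mathfrak S$ exactly at the awkward corner $(1,1)$, where the good strategies stop pointing inward and the orbit could a priori be trapped; what resolves this --- both in fixing the sign of $d_{t+1}$ and in the first bootstrap step --- is the forward invariance of $\mathfrak S$, which forbids both averages from dropping below $1$.
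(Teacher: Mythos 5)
Your proof is correct, but it takes a genuinely different route from the paper for the convergence statement. For part 1 you are in fact reproving and applying the paper's Corollary \ref{lBC} (the scalar Ces\`aro lemma, which the paper derives from the Blackwell-type Proposition \ref{lemBlackwell}); the application via the clauses $a\geq 1$ and $b\leq 2$ is exactly the paper's argument at the end of the proof of Theorem \ref{twlikeSmale}. For part 2, however, the paper (proving the generalization Theorem \ref{twlikeSmale} and specializing to $v=(2,2)$) builds a shrinking family of \emph{invariant and absorbing} planar neighbourhoods $O_\delta(v)$ and invokes the escape-set machinery of Lemma \ref{lem3zbiory} and Corollary \ref{lemuwypuk}, whereas you reduce everything to two scalar observables --- the difference $w_t=\overline{x}^1_t-\overline{x}^2_t$ and the mean $\mu_t$ --- and apply the same one-dimensional lemma three times (once to kill $w_t$, twice to bootstrap $\mu_t$ up to $2$), with the forward invariance of $\mathfrak S$ (the facets $2x_1+x_2\geq 3$, $x_1+2x_2\geq 3$) handling the corner at $(1,1)$. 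Your checks are sound: the exclusion of the profile $(D,C)$ when $w_t\geq 0$ is airtight, and the two-stage bootstrap of $\mu_t$ correctly uses $w_t\to 0$ to rule out an average above $2$ coexisting with a small mean. What your version buys is a fully elementary, self-contained proof that needs no planar geometry; what it gives up is generality --- the difference/mean decomposition works precisely because the target $(2,2)$ sits on the diagonal of a symmetric profile, and it would not extend to the paper's Theorems \ref{twlikeSmale} and \ref{twzbiega}, where the limit is an arbitrary $\beta$-core point or an asymmetric intersection point and $w_t$ does not tend to $0$; that is what the neighbourhood construction is for. One cosmetic remark: the forward invariance of $R=[1,2]^2\cap\{|a-b|<\varepsilon\}$ that you establish early on is never used in your final argument and could be deleted.
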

If the payoff  in the repeated game is defined as the upper limit of average payoffs then the strategy profile $s^*=(s_1^*,\,s_2^*)$ is a Nash equilibrium in the set of memory strategies.

The Banach limit $Lim$ is a continuous linear functional defined on the space $l^\infty$ of bounded scalar sequences\footnote{The definition and properties of Banach limit can be found in $\cite{lax}$.} that is an extension of the functional which associates any convergent sequence with its limit.
If the payoff is defined as a Banach limit of the sequence of the average payoffs then the  Nash equilibrium $s^*$ has an additional interesting property. The construction of good strategies guarantees that the deviating player's payoff will not exceed the good strategy player's payoff by more than $\varepsilon$. We define the payoff in the repeated game by
\[
u^{\infty}_i(s,x_1)=Lim\, \overline{x}^i_t \qquad \mbox{ for } i=1,2.
\]

\begin{Proposition}
Suppose player 1 plays a good strategy $s_1^*$. If  $s=(s_1^*,s_2)$, where $s_2$ is an arbitrary memory strategy of player 2, then
\[\displaystyle
u^{\infty}_2(s,x_1)\leqslant u^{\infty}_1(s,x_1)+\varepsilon
\]
for every $x_1\in \mathcal{S}$.
\end{Proposition}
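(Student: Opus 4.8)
The plan is to show that along the course of the game, the average payoff of the deviating player 2 stays within $\varepsilon$ of player 1's average payoff, and then conclude by linearity and continuity of the Banach limit. The key observation is a stage-by-stage inequality relating the two coordinates of the payoff vector $x_{t+1} = u(s(\overline{x}_t))$ to the current average $\overline{x}_t$, which is forced by the definition of the good strategy $s_1^*$.

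First I would analyze, for a fixed stage $t$, the possible values of $x_{t+1}^1 - x_{t+1}^2$ in terms of whether $\overline{x}_t$ lies in the cooperation region of $s_1^*$. If $\overline{x}_t = (a,b)$ satisfies $b < a + \varepsilon$, $a \geq 1$, $b \leq 2$, then player 1 plays $C$; depending on player 2's action the payoff is either $(2,2)$ (so $x_{t+1}^2 - x_{t+1}^1 = 0$) or $(0,3)$ (so $x_{t+1}^2 - x_{t+1}^1 = 3$). If $\overline{x}_t$ is outside that region, player 1 plays $D$, and the payoff is either $(3,0)$ or $(1,1)$, so $x_{t+1}^2 - x_{t+1}^1 \in \{-3, 0\}$. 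The point is to turn this into a statement of the form: whenever $\overline{x}_t^2 - \overline{x}_t^1$ threatens to exceed $\varepsilon$, the next payoff increment $x_{t+1}^2 - x_{t+1}^1$ is non-positive, which pulls the running average back. Concretely, if $\overline{x}_t^2 - \overline{x}_t^1 \geq \varepsilon$ then (since also $\overline{x}_t^2 \leq 3$ and hence $\overline{x}_t^1 \leq 3 - \varepsilon$, etc., or more simply because $b \geq a + \varepsilon$ violates the first defining condition) player 1 plays $D$, so $x_{t+1}^2 - x_{t+1}^1 \leq 0$.

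From this I would derive that $\limsup_t (\overline{x}_t^2 - \overline{x}_t^1) \leq \varepsilon$. Writing $d_t := \overline{x}_t^2 - \overline{x}_t^1$ and $\delta_{t+1} := x_{t+1}^2 - x_{t+1}^1$, the recursion $d_{t+1} = \frac{t\,d_t + \delta_{t+1}}{t+1}$ holds. Standard Cesàro-type reasoning — identical in spirit to the argument behind part 1 of Theorem \ref{twSmale} — shows that if $d_t > \varepsilon$ forces $\delta_{t+1} \leq 0$, then $d_t$ cannot stay bounded away above $\varepsilon$: once $d_t$ exceeds $\varepsilon$ the increments are non-positive so $d_{t+1} \leq \frac{t}{t+1} d_t < d_t$, and a short estimate gives $\limsup_t d_t \leq \varepsilon$. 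Hence $\overline{x}_t^2 \leq \overline{x}_t^1 + \varepsilon + o(1)$.

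Finally, I would apply the Banach limit $Lim$. By linearity, $u_2^\infty(s,x_1) - u_1^\infty(s,x_1) = Lim\,(\overline{x}_t^2 - \overline{x}_t^1) = Lim\, d_t$. Since $Lim$ is monotone and agrees with $\limsup$-type bounds in the sense that $Lim\, d_t \leq \limsup_t d_t$ for bounded sequences, the bound $\limsup_t d_t \leq \varepsilon$ yields $u_2^\infty(s,x_1) - u_1^\infty(s,x_1) \leq \varepsilon$, which is the claim. The main obstacle is the middle step: making the "overshoot is self-correcting" argument fully rigorous, i.e. proving $\limsup_t d_t \leq \varepsilon$ from the one-step implication, handling the bookkeeping of how far and how long $d_t$ can exceed $\varepsilon$ before the non-positive increments force it back down. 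This is exactly the kind of elementary but slightly delicate averaging estimate that underlies Smale's Theorem \ref{twSmale}(1), so I would either invoke that theorem's proof technique directly or reproduce the short computation.
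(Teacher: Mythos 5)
Your proposal is correct, and since the paper states this Proposition without proof there is nothing to diverge from. The stage-by-stage observation is right: whenever $\overline{x}_t^2-\overline{x}_t^1\geq\varepsilon$ the first defining condition of $s_1^*$ fails, player 1 defects, and the next payoff is $(3,0)$ or $(1,1)$, so $x_{t+1}^2-x_{t+1}^1\leq 0$. The ``delicate averaging estimate'' you flag as the main obstacle is precisely Corollary \ref{lBC} of the paper applied to $a_n=x_n^2-x_n^1$ with $c=\varepsilon$ (the hypothesis $\bar a_n>\varepsilon\Rightarrow a_{n+1}\leq 0\leq\varepsilon$ holds), giving $\limsup_t(\overline{x}_t^2-\overline{x}_t^1)\leq\varepsilon$ with no further bookkeeping; combined with linearity of $Lim$ and the standard bound $Lim\,d_t\leq\limsup_t d_t$, this closes the argument exactly as you describe.
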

It means that if player 1 plays a good strategy, then his payoff is not smaller than his opponent's payoff minus $\varepsilon$. But the constant $\varepsilon$  is controlled by the player 1, so he can choose it as small as he wish. In this sense, we can say that good strategies not only are Nash equilibria in the set of memory strategies, but also they are \textit{safe Nash equilibria}.

\section{Some properties of the dynamical systems generated by memory strategies}\label{seclematy}

We consider a normal form game $G=\left(\mathcal{N},(A_i)_{i\in
\mathcal{N}},(u_i)_{i\in \mathcal{N}}\right)$, where
$\mathcal{N}=\{1,\ldots, N\}$ is the set of players, $A_i$ is a
finite set of actions of  player $i$ and $u_i:A=A_1\times
A_2\times \ldots \times A_N\rightarrow \mathbb{R}$
is the payoff function of  player $i$. \\
A memory strategy of player $i$ is a function
$s_i:\mathfrak{S}\rightarrow A_i$, where $\mathfrak{S}:=
conv\{u(a) \: : \: a\in A\}$ is the convex hull of the set of
vector payoffs $u=(u_1,\,u_2,\ldots,u_N)$. The strategy profile
$s=(s_1,\,s_2,\ldots,\,s_N)$ determines a map
$f^s:\mathfrak{S}\rightarrow \mathfrak{S}$ by
\begin{equation}\label{funkcjaf}
f^s(x)=(u\circ s)(x).
\end{equation}
and the dynamical system $\beta^s=(\beta^s_t)_{t\geq 1}$
\begin{equation}\label{procesdynam}
\beta_t^s(x)=\frac{tx+f^s(x)}{t+1}.
\end{equation}
We say that a sequence $(\overline{x}_t)_{t\geq t_0}\subset \mathfrak{S}$ is a trajectory of the dynamical system $\beta^s$ if
\begin{equation}\label{trajectory}
\overline{x}_{t+1}=\beta^s_t(\overline{x}_t)
\end{equation}
Observe that if $\overline{x}_{t_0}$ is the given average payoff after stage $t_0$ then the trajectory $(\overline{x}_t)_{t\geq t_0}\subset \mathfrak{S}$  of the dynamical system $\beta^s$ given by (\ref{trajectory}) is the sequence of average payoffs
\[
\overline{x}_{t+1}=\frac{t_0 \overline{x}_{t_0}+f^s
(\overline{x}_{t_0})+\ldots+f^s(\overline{x}_{t})}{t+1}
\]

Since
\[
\beta^s_t(\bar{x})-\bar{x}=\frac{f^s(\bar{x})-\bar{x}}{t+1}
\]
and the set $\mathfrak{S}$ is bounded, then
\begin{equation}\label{malykrok1}
\forall_{\varepsilon>0}\quad \exists_T\quad\forall_{t>T}\quad\forall_{\bar{x}\in\mathfrak{S}}
\;\;\;|\beta^s_t(\bar{x})-\bar{x}|<\varepsilon
\end{equation}
So, for every $\varepsilon>0$, there exists $T$ such that for an arbitrary trajectory $(\overline{x}_t)_{t\geq t_0}$  of the dynamical system $\beta^s$ it holds
\begin{equation}\label{malykrok}
\forall_{t>\max\{t_0,T\}}\;\;\; |\overline{x}_{t+1}-\overline{x}_t|<\varepsilon.
\end{equation}

The following  proposition is the deterministic version of the  Blackwell approachability result (see \cite{black}). An elementary proof is presented in \cite{pz}.

\begin{Proposition}\label{lemBlackwell}
Suppose that a set $W\subset \mathfrak{S}$ is closed and a trajectory $(\overline{x}_t)_{t\geq t_0}$  of the dynamical system $\beta^s$ satisfies
\begin{equation}\label{zallem}
\forall t\geq t_0,\,\exists y_t\in W,\;\; |\overline{x}_t-y_t|=\mbox{dist}(\overline{x}_t,W) \mbox{ and } \langle \overline{x}_t-y_t,\,f^s(\overline{x}_t)-y_t\rangle\leq 0.
\end{equation}
Then
\[
\lim_{t\rightarrow \infty} dist(\overline{x}_t,W)=0.
\]
\end{Proposition}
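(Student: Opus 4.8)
The plan is to track the squared distance $\rho_t^2 := \mathrm{dist}(\overline{x}_t,W)^2 = |\overline{x}_t - y_t|^2$ along the trajectory and derive a recursion that forces it to zero. First I would use (\ref{procesdynam}) and (\ref{trajectory}) to write the decomposition
\[
\overline{x}_{t+1}-y_t=\frac{t(\overline{x}_t-y_t)+(f^s(\overline{x}_t)-y_t)}{t+1},
\]
and, since $y_t\in W$, estimate $\rho_{t+1}^2=\mathrm{dist}(\overline{x}_{t+1},W)^2\le |\overline{x}_{t+1}-y_t|^2$ (here $y_t$ serves as a possibly non-optimal witness for $\overline{x}_{t+1}$). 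Expanding the square gives
\[
(t+1)^2\rho_{t+1}^2\le t^2\rho_t^2+2t\langle \overline{x}_t-y_t,\,f^s(\overline{x}_t)-y_t\rangle+|f^s(\overline{x}_t)-y_t|^2.
\]

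Next I would invoke hypothesis (\ref{zallem}): the inner-product term is $\le 0$ and may be dropped. Putting $M:=\mathrm{diam}(\mathfrak{S})$, which is finite because $\mathfrak{S}$ is bounded, and noting that both $f^s(\overline{x}_t)$ and $y_t$ lie in $\mathfrak{S}$, we get $|f^s(\overline{x}_t)-y_t|\le M$, hence
\[
(t+1)^2\rho_{t+1}^2\le t^2\rho_t^2+M^2.
\]
Then I would telescope: setting $a_t:=t^2\rho_t^2$, this reads $a_{t+1}\le a_t+M^2$, so by induction $a_t\le a_{t_0}+(t-t_0)M^2$ for all $t\ge t_0$. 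Dividing by $t^2$,
\[
\rho_t^2\le \frac{a_{t_0}}{t^2}+\frac{(t-t_0)M^2}{t^2}\le \frac{t_0^2\rho_{t_0}^2}{t^2}+\frac{M^2}{t}\xrightarrow[t\to\infty]{}0,
\]
which yields $\lim_{t\to\infty}\mathrm{dist}(\overline{x}_t,W)=0$.

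As for obstacles, there is essentially none: the argument is a one-line potential-function computation, the potential being $t^2$ times the squared distance to $W$. The only point requiring care is that the optimal projection point $y_t$ changes with $t$, so one cannot directly compare $\mathrm{dist}(\overline{x}_{t+1},W)$ with $\mathrm{dist}(\overline{x}_t,W)$ through a common reference point; this is circumvented by bounding $\mathrm{dist}(\overline{x}_{t+1},W)$ from above using $y_t\in W$, which is exactly where the convexity-type angle condition in (\ref{zallem}) is exploited to kill the cross term. Closedness of $W$ is used only to ensure the infimum defining the distance is attained, which is already built into the statement via the existence of the points $y_t$.
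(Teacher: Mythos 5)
Your proof is correct and complete: the decomposition of $\overline{x}_{t+1}-y_t$, the use of $y_t\in W$ as a suboptimal witness for $\mathrm{dist}(\overline{x}_{t+1},W)$, the elimination of the cross term via (\ref{zallem}), and the telescoping of $a_t=t^2\rho_t^2$ all go through, yielding $\rho_t^2=O(1/t)$. The paper itself does not prove this proposition (it defers to the reference \cite{pz}), but your argument is precisely the standard potential-function proof of deterministic Blackwell approachability, so there is nothing to correct.
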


The point $y_t$ in (\ref{zallem}) is a proximal point in the set $W$ to the point $\overline{x}_t$. If the set $W$ is convex and closed and $f^s(\overline{x}_t)\in W$ for $t\geq t_0$ then (\ref{zallem}) holds true. As a corollary from Proposition \ref{lemBlackwell} we obtain that

\begin{Corollary}\label{lemuwypuk}
If the set $W\subset  \mathfrak{S}$ is closed and convex and  a trajectory $(\overline{x}_t)_{t\geq t_0}$  of the dynamical system $\beta^s$ satisfies
\begin{equation}\label{zalwypukl}
\exists_{\overline{t}\geq t_0}\quad \forall_{t>\overline{t}} \quad f^s(\overline{x}_t)\in W,
\end{equation}
then
\[
\forall_{\epsilon>0}\quad \exists_{t_{\epsilon}>\overline{t}} \quad \forall_{t>t_{\epsilon}} \quad \overline{x}_t\in W^{\epsilon}.
\]
\end{Corollary}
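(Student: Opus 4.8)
The plan is to deduce the corollary directly from Proposition~\ref{lemBlackwell}, applied not to the whole trajectory but to its tail. Fix $\overline{t}\geq t_0$ as provided by hypothesis (\ref{zalwypukl}) and set $t_0':=\overline{t}+1$. Since the defining relation (\ref{trajectory}) of a trajectory holds for every $t\geq t_0$, it holds a fortiori for every $t\geq t_0'$; hence $(\overline{x}_t)_{t\geq t_0'}$ is again a trajectory of the dynamical system $\beta^s$, now with initial stage $t_0'$. Note also that $W\neq\emptyset$: otherwise (\ref{zalwypukl}) could not hold and the implication is vacuous, so there is nothing to prove.

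Next I would verify that this tail trajectory satisfies the assumption (\ref{zallem}) with the set $W$. For each $t\geq t_0'$ let $y_t$ be the metric projection of $\overline{x}_t$ onto $W$, which is well defined because $W$ is nonempty, closed and convex. By definition $|\overline{x}_t-y_t|=\mathrm{dist}(\overline{x}_t,W)$, and the variational characterisation of the projection onto a convex set gives
\[
\langle \overline{x}_t-y_t,\; w-y_t\rangle \leq 0 \qquad\text{for every } w\in W.
\]
Taking $w=f^s(\overline{x}_t)$, which lies in $W$ by (\ref{zalwypukl}) since $t\geq t_0'>\overline{t}$, yields exactly $\langle \overline{x}_t-y_t,\,f^s(\overline{x}_t)-y_t\rangle\leq 0$. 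Thus the hypotheses of Proposition~\ref{lemBlackwell} are met for the tail trajectory and the closed set $W$.

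Proposition~\ref{lemBlackwell} then gives $\lim_{t\to\infty}\mathrm{dist}(\overline{x}_t,W)=0$. Unwinding the definition of the limit, for every $\epsilon>0$ there is $t_\epsilon>\overline{t}$ such that $\mathrm{dist}(\overline{x}_t,W)<\epsilon$, i.e.\ $\overline{x}_t\in W^{\epsilon}$, for all $t>t_\epsilon$, which is precisely the asserted conclusion. The argument presents no real obstacle; the only points requiring a moment's care are the harmless shift of the initial stage from $t_0$ to $\overline{t}+1$ so that the hypothesis of Proposition~\ref{lemBlackwell} holds for \emph{all} stages of the (restarted) trajectory, and the use of the obtuse-angle inequality for projections onto convex sets, which is exactly the mechanism by which the convexity of $W$ supplies the second condition in (\ref{zallem}).
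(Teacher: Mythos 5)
Your proposal is correct and follows essentially the same route as the paper, which likewise derives the corollary from Proposition~\ref{lemBlackwell} by noting that for a closed convex $W$ containing $f^s(\overline{x}_t)$ the proximal point $y_t$ satisfies the obtuse-angle inequality in (\ref{zallem}). Your explicit handling of the tail shift to $\overline{t}+1$ is a small tidiness the paper leaves implicit, but it is not a different argument.
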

Taking $W=(-\infty,c]$ in Proposition \ref{lemBlackwell} we obtain the
following property of real sequences.
\begin{Corollary}\label{lBC}
Suppose that   $(a_n)_{n=1}^\infty$ is a bounded sequence in
 $ \mathbb{R}$ and $(\bar{a}_n)_{n=1}^\infty$ is the sequence of arithmetic means, i.e. $\bar{a}_n=\frac{1}{n}\sum_{k=1}^n a_k$.If we have

 \[
(\bar{a}_n>c \quad \Rightarrow \quad a_{n+1}\leq c)
 \]
for almost all $n$   and a fixed constant $c\in \mathbb{R}$, then
 \[
 \limsup_{n\rightarrow \infty }\bar{a}_n \leq c.
 \]
 \end{Corollary}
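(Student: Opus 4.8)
The plan is to apply Proposition~\ref{lemBlackwell} in the one-dimensional situation, taking as target set the half-line $W=(-\infty,c]$ (if one insists on staying inside a compact $\mathfrak{S}$, one replaces it by $W=I\cap(-\infty,c]$, where $I$ is any fixed closed bounded interval containing all the $a_n$; such an $I$ exists since $(a_n)$ is bounded, and $W$ is then closed and convex). The sequence of arithmetic means is exactly a trajectory of the averaging dynamics (\ref{procesdynam}): indeed $\bar a_{n+1}=\frac{n\bar a_n+a_{n+1}}{n+1}$, so the role of the ``stage payoff'' $f^s(\overline{x}_n)$ is played here by $a_{n+1}$. Strictly speaking $a_{n+1}$ need not be a function of $\bar a_n$, so $(\bar a_n)$ is not literally a trajectory of some $\beta^s$; but the proof of Proposition~\ref{lemBlackwell} uses only the recursive averaging identity, the boundedness of the increments $a_{n+1}$ (which also yields $\bar a_{n+1}-\bar a_n\to 0$, cf.\ (\ref{malykrok1})), and the pointwise inequality (\ref{zallem}) along the trajectory, so it applies verbatim.

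Next I would check (\ref{zallem}). Let $n_0$ be an index beyond which the implication $\bar a_n>c\Rightarrow a_{n+1}\le c$ holds, and for $n\ge n_0$ take $y_n:=\min\{\bar a_n,c\}$, the nearest point of $W$ to $\bar a_n$; then $|\bar a_n-y_n|=(\bar a_n-c)^+=\mbox{dist}(\bar a_n,W)$. If $\bar a_n\le c$, then $\bar a_n-y_n=0$ and the inner product in (\ref{zallem}) vanishes. If $\bar a_n>c$, then $y_n=c$, so $\bar a_n-y_n=\bar a_n-c>0$, while by hypothesis $a_{n+1}-y_n=a_{n+1}-c\le 0$; hence $\langle\bar a_n-y_n,\,a_{n+1}-y_n\rangle=(\bar a_n-c)(a_{n+1}-c)\le 0$. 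Thus (\ref{zallem}) holds from $n_0$ on, and Proposition~\ref{lemBlackwell} gives $\mbox{dist}(\bar a_n,W)\to 0$, i.e.\ $(\bar a_n-c)^+\to 0$, which is precisely $\limsup_{n\to\infty}\bar a_n\le c$.

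I do not expect a serious obstacle; the only point needing a line of justification is that this scalar recursion fits the framework of Proposition~\ref{lemBlackwell} even though the ``payoff'' $a_{n+1}$ is not a function of the state $\bar a_n$. Should one prefer a self-contained argument, the conclusion also follows from a direct estimate on $d_n:=(\bar a_n-c)^+$: separating the case $\bar a_n>c$ (where $a_{n+1}\le c$ forces a contraction of $d$) from the case $\bar a_n\le c$ (where $d_{n+1}=O(1/n)$) yields, for $n\ge n_0$, the bound $d_{n+1}\le\max\bigl\{\tfrac{n}{n+1}\,d_n,\ \tfrac{M'}{n+1}\bigr\}$ with $M'$ any bound for $|a_{n+1}-c|$, from which $d_n\to 0$ follows by a routine argument.
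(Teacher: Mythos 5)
Your proposal is correct and is essentially the paper's own argument: the paper proves the corollary in one line by taking $W=(-\infty,c]$ in Proposition~\ref{lemBlackwell}, which is exactly what you do, with the verification of (\ref{zallem}) via $y_n=\min\{\bar a_n,c\}$ spelled out. Your remark that $a_{n+1}$ need not be a function of $\bar a_n$ (so the sequence is not literally a trajectory of some $\beta^s$, but the proof of the proposition only uses the averaging recursion) is a fair point that the paper passes over silently.
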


\begin{Definition}
Let  $s$ be a memory strategies profile. We say that a set $Z\subset\mathfrak{S} $ is:
\begin{enumerate}
\item invariant for the dynamical system $\beta^s$ iff
\[\exists_{t_Z\geq 1} \quad \forall_{x\in Z} \quad \forall_{t\geqslant t_Z} \quad \frac{tx+f^s(x)}{t+1}\in Z,
\]
\item an escape set for the dynamical system $\beta^s$ iff every trajectory $(\overline{x}_t)_{t\geq t_0}$ of the dynamical system $\beta^s$ satisfies
\[
\forall_{\tau\geq t_0}\quad \exists_{t>\tau} \quad \overline{x}_{t}\notin Z.
\]
\item an absorbing set for the dynamical system $\beta^s$ iff every trajectory $(\overline{x}_t)_{t\geq t_0}$ of the dynamical system $\beta^s$ satisfies
\[
\forall_{\tau\geq t_0}\quad \exists_{t>\tau} \quad \overline{x}_{t}\in Z.
\]
\end{enumerate}
\end{Definition}

 In the next section we study limit properties of some dynamical systems generated by memory strategies. To show that a  trajectory is convergent to a point  we will construct a family of absorbing and invariant neighbourhoods of the limit points. Invariance is usually easy to check. To show that a neighbourhood is absorbing we shall use the following lemmas.\\
 Hereafter to the end of the section we fix a memory strategies profile $s$ and consider the dynamical system $\beta^s$.

\begin{Lemma}\label{lem3zbiory}
Let $V=\overline{conv}f^s(\mathfrak{S})$, $\varepsilon>0$ and $ V^{\epsilon}=A\cup B\cup C\cup Z$, where the sets $A,B,C$ are pairwise disjoint.
Suppose that the set $B\cup C\cup Z$ is invariant and $A,C$ are the escape sets. If there exist a convex set $W\subset V^\varepsilon$ and $\delta>0$
 such that $f^s(B\cup C)\subset W$ and $W^\delta\cap(B\setminus Z)=\emptyset$ then the set $Z$ is absorbing.
\end{Lemma}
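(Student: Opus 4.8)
The plan is to argue by contradiction. Fix an arbitrary trajectory $(\overline{x}_t)_{t\geq t_0}$ of $\beta^s$ and suppose $Z$ is \emph{not} absorbing for it, i.e. there is $\tau_1$ with $\overline{x}_t\notin Z$ for every $t>\tau_1$. First I would use that $V=\overline{conv}f^s(\mathfrak{S})$ is a closed convex subset of $\mathfrak{S}$ and that $f^s(\overline{x}_t)\in f^s(\mathfrak{S})\subset V$ for all $t$; Corollary \ref{lemuwypuk} then provides a $t_2$ with $\overline{x}_t\in V^\varepsilon=A\cup B\cup C\cup Z$ for $t>t_2$, so that for every $t>\max\{\tau_1,t_2\}$ the point $\overline{x}_t$ lies in exactly one of the pairwise disjoint sets $A,B,C$.

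Next I would trap the trajectory inside $B\cup C$. Let $\bar t$ be a time witnessing the invariance of $B\cup C\cup Z$. Since $A$ is an escape set there are arbitrarily large $t$ with $\overline{x}_t\notin A$; pick one such $t_1>\max\{\tau_1,t_2,\bar t\}$. Then $\overline{x}_{t_1}\in B\cup C$, and by invariance of $B\cup C\cup Z$ together with an induction on $t$ we obtain $\overline{x}_t\in B\cup C\cup Z$ for all $t\geq t_1$; since $\overline{x}_t\notin Z$ this forces $\overline{x}_t\in B\cup C$, and hence $f^s(\overline{x}_t)\in f^s(B\cup C)\subset W$, for every $t\geq t_1$.

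Now I would apply the approachability statement a second time, to the set $\overline{W}\cap\mathfrak{S}$, which is a closed convex subset of $\mathfrak{S}$ containing $f^s(B\cup C)$ (recall $f^s$ takes values in $\mathfrak{S}$). Corollary \ref{lemuwypuk} yields a $t_3$ with $\overline{x}_t\in(\overline{W}\cap\mathfrak{S})^{\delta/2}\subset W^{\delta}$ for all $t>t_3$, the inclusion holding since $\mathrm{dist}(x,W)\leq\mathrm{dist}(x,\overline{W}\cap\mathfrak{S})$ for every $x$. Thus for every $t>\max\{t_1,t_3\}$ we have simultaneously $\overline{x}_t\in B\cup C$ and $\overline{x}_t\in W^{\delta}$. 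If such an $\overline{x}_t$ lay in $B$, then, being outside $Z$, it would belong to $W^{\delta}\cap(B\setminus Z)=\emptyset$, which is impossible; hence $\overline{x}_t\in C$ for all $t>\max\{t_1,t_3\}$, contradicting that $C$ is an escape set. This contradiction proves that $Z$ is absorbing.

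I expect the routine points to be the bookkeeping of the threshold indices and the elementary facts about $\varepsilon$-neighbourhoods. The step carrying the actual content is the coordinated use of the two escape hypotheses: the escape property of $A$ is exactly what drives the trajectory into $B\cup C$ — after which invariance of $B\cup C\cup Z$ keeps it there — while the escape property of $C$ is exactly what forbids the trajectory from remaining in $C$ forever; wedged between these and the approachability of $W$, the trajectory is compelled to revisit $Z$ infinitely often. The one point I would be careful about is choosing $t_1$ beyond the invariance time $\bar t$ before running the induction, so that the trajectory is genuinely trapped in $B\cup C\cup Z$ from stage $t_1$ onward.
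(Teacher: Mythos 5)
Your proposal is correct and follows essentially the same route as the paper: contradiction, confinement of the trajectory to $V^\varepsilon$ via Corollary \ref{lemuwypuk}, use of the escape property of $A$ plus invariance of $B\cup C\cup Z$ to trap the tail in $B\cup C$, a second application of the approachability corollary to land in $W^\delta$, and the escape property of $C$ combined with $W^\delta\cap(B\setminus Z)=\emptyset$ to close the contradiction. Your extra care in applying the corollary to $\overline{W}\cap\mathfrak{S}$ (since $W$ is only assumed convex) is a small refinement the paper glosses over, but the argument is the same.
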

\begin{proof}
Suppose, contrary to our claim, that the set $Z$ is not absorbing.  Then
there exists a trajectory $(\overline{x}_t)_{t\geq t_0}$ such that
\[
\exists_{t_1>\max{t_0,t_{\epsilon}}} \quad \forall_{t>t_1} \quad \overline{x}_t\notin
Z
\]
where $t_{\epsilon}$ is from Corollary $\ref{lemuwypuk}$, such that $\overline{x}_t\in V^{\epsilon}$ for all $t>t_{\epsilon}$.
Since the set $B\cup C\cup Z$ is invariant, there exists
$t_2:=t_{B\cup C\cup Z}$ such that $\beta^s_t(\bar{x})\in B\cup
C\cup Z$ for every $\bar{x}\in B\cup C\cup Z$ and $t>t_2$. Since
$A$ is an escape set then
\[
\exists t_3>\max\{t_1,\,t_2\},\;\;\;\bar{x}_{t_3}\notin A
\]
So, $\bar{x}_{t_3}\in B\cup C$ and by the invariance of the set
$B\cup C\cup Z$ we obtain that
\[
\forall t\geq t_3,\;\;\;\bar{x}_t\in B\cup C\cup Z
\]
By Corollary \ref{lemuwypuk}, there exists $t_\delta>t_3$ such
that
\[
\bar{x}_t\in W^\delta \;\mbox{ for }\;t>t_\delta
\]
Since $C$ is an escape set then there exists $\bar{t}>t_\delta$
such that $\bar{x}_{\bar{t}}\notin C$. So, $\bar{x}_{\bar{t}}\in
(B\setminus Z)\cap W^\delta$, which contradicts the assumption
that $(B\setminus Z)\cap W^\delta=\emptyset$.
\end{proof}

\begin{Lemma}\label{lemzbiorB}
Assume that a set $D\subset\mathfrak{S}$ is invariant and absorbing and $D=B\cup Z$. If there exists a closed convex set $W\subset \mathfrak{S}$ and $\varepsilon>0$ such that $f^s(B)\subset W$ and $W^{\epsilon}\cap (B\backslash Z)=\emptyset$ then the set $Z$ is absorbing.
\end{Lemma}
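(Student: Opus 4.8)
The plan is to argue by contradiction, mirroring but simplifying the proof of Lemma \ref{lem3zbiory}. Suppose $Z$ is not absorbing. Then there exist a trajectory $(\overline{x}_t)_{t\geq t_0}$ of $\beta^s$ and a threshold $\tau\geq t_0$ such that $\overline{x}_t\notin Z$ for all $t>\tau$.

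First I would combine the absorbing and invariant properties of $D$ to trap the trajectory in $B$. Let $t_D$ be the index from the definition of invariance of $D$. Since $D$ is absorbing, choose $t_1>\max\{\tau,t_D\}$ with $\overline{x}_{t_1}\in D=B\cup Z$; because $t_1>\tau$ forces $\overline{x}_{t_1}\notin Z$, we get $\overline{x}_{t_1}\in B\subset D$. Invariance of $D$ then gives, by induction on $t$, that $\overline{x}_{t+1}=\beta^s_t(\overline{x}_t)\in D$ for every $t\geq t_1$ (here we use $t_1\geq t_D$); and since $\overline{x}_t\notin Z$ for $t>\tau$, in fact $\overline{x}_t\in B$ for all $t\geq t_1$.

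Next, from $\overline{x}_t\in B$ for $t\geq t_1$ and the hypothesis $f^s(B)\subset W$ we obtain $f^s(\overline{x}_t)\in W$ for all $t>t_1$. Since $W$ is closed and convex, Corollary \ref{lemuwypuk} (applied with $\overline{t}=t_1$ and with the given $\varepsilon$) yields $t_\varepsilon>t_1$ such that $\overline{x}_t\in W^\varepsilon$ for all $t>t_\varepsilon$. Picking any $t>t_\varepsilon$, we have simultaneously $\overline{x}_t\in B$, $\overline{x}_t\notin Z$, and $\overline{x}_t\in W^\varepsilon$, i.e. $\overline{x}_t\in(B\setminus Z)\cap W^\varepsilon$, which contradicts the assumption $W^\varepsilon\cap(B\setminus Z)=\emptyset$. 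Therefore $Z$ is absorbing.

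I do not expect a genuine obstacle here; the argument is a routine specialization of Lemma \ref{lem3zbiory} (with the roles of the escape sets $A$ and $C$ collapsed, since $D=B\cup Z$ is now assumed directly absorbing and invariant). The only point requiring a little care is the bookkeeping of the indices $t_0,\tau,t_D,t_1,t_\varepsilon$: one must choose $t_1$ large enough that invariance of $D$ is already in effect and that $t_1>\tau$, so that "the trajectory avoids $Z$" holds from $t_1$ onward and the conclusion of Corollary \ref{lemuwypuk} can be invoked with starting index $t_1$.
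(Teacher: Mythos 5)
Your proof is correct and follows essentially the same route as the paper's: contradiction, using absorbing plus invariant to trap the trajectory in $B\setminus Z$ eventually, then Corollary \ref{lemuwypuk} to force it into $W^{\varepsilon}$, contradicting $W^{\varepsilon}\cap(B\setminus Z)=\emptyset$. Your only addition is to spell out the induction that combines absorbing and invariant (which the paper states in one line), so nothing further is needed.
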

\begin{proof}
Suppose, contrary to our claim, that the set $Z$ is not absorbing. Then
there exists a trajectory $(\overline{x}_t)_{t\geq t_0}$ such that
\[
\exists_{t_1>t_0} \quad \forall_{t>t_1} \quad \overline{x}_t\notin
Z.
\]
Since the set $D$ is invariant and absorbing then there exists
$t_2>t_1$ such that
\[
\forall t>t_2,\;\;\; \overline{x}_{t}\in D
 \]
So $\overline{x}_t\in B\setminus Z$ for $t>t_2$. Thus,
$f^s(\overline{x}_t)\in W$, for $t>t_2$ and by Corollary
\ref{lemuwypuk} there exists $t_\varepsilon>t_2$ such that
$\overline{x}_{t}\in W^{\epsilon}$ for  $t>t_\varepsilon$. This is
a contradiction to the assumption $W^{\epsilon}\cap (B\setminus
Z)=\emptyset$.

\end{proof}

\begin{Lemma}\label{lemwyjscie}
Suppose that $Z\subset \mathfrak{S}$ and $f^s(Z)\subset W$, where $W$ is a closed convex subset of $\mathfrak{S}$.
If there exists $\varepsilon>0$ such that $W^\varepsilon\cap Z=\emptyset$ then $Z$ is an escape set.
\end{Lemma}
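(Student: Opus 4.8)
The plan is to argue by contradiction and reduce the statement to Corollary \ref{lemuwypuk}. Suppose $Z$ is not an escape set. Unwinding the definition of an escape set, there is a trajectory $(\overline{x}_t)_{t\geq t_0}$ of $\beta^s$ and a stage $\tau\geq t_0$ such that $\overline{x}_t\in Z$ for every $t>\tau$. I would record this eventual confinement of the trajectory to $Z$ as the starting point of the argument.

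Next I would feed this into the convexity corollary. Since $f^s(Z)\subset W$ by hypothesis, the inclusion $\overline{x}_t\in Z$ for $t>\tau$ gives $f^s(\overline{x}_t)\in W$ for all $t>\tau$; this is precisely assumption (\ref{zalwypukl}) with $\overline{t}:=\tau$. As $W$ is closed and convex, Corollary \ref{lemuwypuk} applies to this trajectory and yields, for every $\epsilon>0$, a stage $t_\epsilon>\tau$ with $\overline{x}_t\in W^\epsilon$ for all $t>t_\epsilon$.

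Finally I would specialize $\epsilon:=\varepsilon$, the separation constant furnished by the hypothesis $W^\varepsilon\cap Z=\emptyset$. Picking any $t>\max\{\tau,t_\varepsilon\}$ we then have $\overline{x}_t\in Z$ and at the same time $\overline{x}_t\in W^\varepsilon$, contradicting $W^\varepsilon\cap Z=\emptyset$. Hence no such trajectory can exist, so every trajectory leaves $Z$ infinitely often, i.e.\ $Z$ is an escape set.

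There is essentially no obstacle here beyond matching quantifiers carefully: the only thing to verify is that ``eventually staying in $Z$'' is exactly what triggers (\ref{zalwypukl}) via $f^s(Z)\subset W$, and that the free $\epsilon$ in Corollary \ref{lemuwypuk} may be taken equal to the given $\varepsilon$. Everything else is a routine unwinding of the definitions of escape set, of $f^s=u\circ s$, and of the neighbourhood $W^\varepsilon$.
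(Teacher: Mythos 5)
Your proposal is correct and follows exactly the paper's own argument: negate the escape property to get a trajectory eventually confined to $Z$, use $f^s(Z)\subset W$ to invoke Corollary \ref{lemuwypuk} with $\overline{t}=\tau$, and take $\epsilon=\varepsilon$ to contradict $W^\varepsilon\cap Z=\emptyset$. No differences worth noting.
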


\begin{proof}
Suppose, contrary to our claim, that $Z$ is not an escape set. So, there
exists a trajectory $(\overline{x}_{t})_{t\geq t_0}$ such that
\begin{equation}\label{zalniewprost2}
\exists \tau>t_0,\, \forall t>\tau,\;\;\;\overline{x}_{t}\in Z.
\end{equation}
So $f^s(\overline{x}_{t})\in W$ for  $t>\tau$. By Corollary
\ref{lemuwypuk}, there exists $t_{\epsilon}>\tau$ such that
$\overline{x}_{t}\in W^{\epsilon}$ for all  $t>t_{\epsilon}$,
which contradicts to (\ref{zalniewprost2}) and the assumption
$W^\varepsilon\cap Z=\emptyset$.
\end{proof}


\section{Semi-cooperative strategies}

In this section we introduce semi-cooperative strategies in the repeated PD that are a generalisation of Smale's good strategies. The semi-cooperative strategy of a player is determined by the choice of  a point  $v$ in the $\beta$-core of PD and a positive constant. If both players choose the same point $v$ then the obtained strategy profile is a Nash equilibrium and the vector payoff in the repeated game equals to $v$. This can be regarded as a very special case of Robert Aumann results presented in $\cite{aumann20}$, $\cite{aumann21}$, $\cite{aumanncore}$ and $\cite{aumannsurvey}$, where it was shown that each payoff from the $\beta$-core of the stage game can be received as a strong Nash equilibrium in the repeated game. Much more interesting is the situation when players choose semi-cooperative strategies corresponding to different points in the $\beta$-core. The limits of trajectories of the dynamical system determined by a semi-cooperative profile are described in Theorem \ref{twzbiega}, which is the main result of the paper.

To recall the definition of the $\beta$-core assume that $G$ is a normal form game like in Section \ref{seclematy}. A correlated strategy $c^{\mathcal{K}}$ of the coalition $\mathcal{K}\subset\mathcal{N}$ is a probability distributions over (the finite set) $A^{\mathcal{K}}=\prod_{i\in\mathcal{K} } A_i$. The set of correlated strategies of the coalition $\mathcal{K}$ is denoted by $C^{\mathcal{K}}$. The  correlated strategy $c^{\mathcal{K}}$ of the coalition $\mathcal{K}$ and the correlated strategy $c^{\mathcal{N}\setminus\mathcal{K}}$ of the anti-coalition $\mathcal{N}\setminus\mathcal{K}$ determine a correlated strategy $c=\left(c^{\mathcal{K}},c^{\mathcal{N}\backslash\mathcal{K}}\right)\in C^{\mathcal{N}}$ of the full coalition. In the usual way we extend payoffs functions $u_i$ onto the set of correlated strategies $C^{\mathcal{N}}$. \\
A correlated strategy $\tilde{c} \in C^{\mathcal{N}}$ belongs to the $\beta$-core $\left(\tilde{c}\in \mathcal{C}_\beta(G)\right)$ iff
\begin{equation}\label{betardzenformula}
\forall_{\mathcal{K}\subset \mathcal{N}, \mathcal{K}\neq \emptyset} \quad \exists_{c^{\mathcal{N}\backslash \mathcal{K}}\in C^{\mathcal{N}\backslash \mathcal{K}}} \quad \forall_{c^{\mathcal{K}}\in C^{\mathcal{K}}} \quad \exists_{j\in \mathcal{K}} \quad  u_j\left(c^{\mathcal{K}},c^{\mathcal{N}\backslash \mathcal{K}}\right)\leqslant u_j(\tilde{c}).
\end{equation}
Taking $\mathcal{K}=\mathcal{N}$ in (\ref{betardzenformula}) we obtain that
\[
\forall c\in C^{\mathcal{N}},\;\exists  j\in \mathcal{N},\;\;\; u_j(c)\leq u_j(\tilde{c}),
\]
which is the weak Pareto optimum condition. \\
Taking a  coalition $\mathcal{K}=\{i\}$ in (\ref{betardzenformula}) we obtain that the payoff $u(\tilde{c})$ is individually rational, i.e
\[
\forall i\in \mathcal{N},\;\;\; u_i(\tilde{c})\geq \min_{c^{\mathcal{N}\setminus\{i\}}\in C^{\mathcal{N}\setminus\{i\}}}\max_{c^{\{i\}}\in C^{\{i\}}}\,u_i(c^{\{i\}},c^{\mathcal{N}\setminus\{i\}}).
\]
Hereafter $G$ denotes the considered PD with payoffs given by (\ref{staticPD}). The set of Pareto optimal and individually rational payoffs for $G$ is illustrated in Figure \ref{beta}. The $\beta$-core for PD (in fact its image by $u$) is the sum of intervals
\[
u(\mathcal{C}_\beta(G))=\{(x,3-\frac{x}{2}):\,1\leq x\leq2\}\cup\{(3-\frac{y}{2},y):\,1\leq y\leq 2\}
\]

Let us fix a point $v$ in the $\beta$-core different to the end points, i.e
\[
v\in u(\mathcal{C}_\beta(G))\setminus\{(1,\,2.5),\,(2.5,\,1)\}.
\]
Roughly speaking, player 1 cooperates if the average payoff is located below the line going through the points $(1,\,1)$ and $v$.
A semi-cooperative strategy of player 1, determined by the point $v$ and a positive constant $\varepsilon>0$, is a map  $s_1^{v,\varepsilon}:\mathfrak{S}\rightarrow \{C,D\}$ given by
\begin{displaymath}
s_1^{v,\varepsilon}(x)=\left\{
\begin{array}{ll}
C & \quad \mbox{if }\quad   x\in K_1(v,\varepsilon)\\
D & \quad \mbox{elsewhere in } \mathfrak{S}
\end{array}
\right.,
\end{displaymath}
where
\begin{equation}\label{zbiorkooperacji1}
K_1(v,\varepsilon):=(T_1(v))^{\varepsilon}\:\cap \: \{x\in \mathfrak{S}: x_1\geqslant 1 \quad \mbox{and } \quad x_2\leqslant v_2\},
\end{equation}
and
\[
T_1(v):=\left\{x\in \mathfrak{S}: x_2\leqslant\frac{v_2-1}{v_1-1}x_1+\frac{v_1-v_2}{v_1-1}
\right\}.
\]
A semi-cooperative strategy of player 2, determined by the point $v$ and $\varepsilon>0$,  is a map $s_2^{v,\varepsilon}:\mathfrak{S}\rightarrow \{C,D\}$ given by the formula:
\begin{displaymath}
s_2^{v,\varepsilon}(x)=\left\{
\begin{array}{ll}
C & \quad \mbox{if }\quad   x\in K_2(v,\varepsilon)\\
D & \quad \mbox{elsewhere in } \mathfrak{S}
\end{array}
\right.,
\end{displaymath}
where
\begin{equation}\label{zbiorkooperacji2}
K_2(v,\varepsilon):=(T_2(t))^{\varepsilon}\: \cap\: \{x\in \mathfrak{S}: x_1\leqslant v_1 \quad \mbox{and} \quad x_2\geqslant
1\}.
\end{equation}
and
\[
T_2(v):=\left\{x\in \mathfrak{S}: x_2\geqslant\frac{v_2-1}{v_1-1}x_1+\frac{v_1-v_2}{v_1-1}
\right\}.
\]
 Semi-cooperative strategies are illustrated in  Figure $\ref{semi}$.\\
We say that player 1 is: an \textit{egoist} if $v_1>v_2$; an \textit{altruist} if $v_1<v_2$; a \textit{balanced player} if $v_1=v_2(=2)$. The second player is:  an \textit{egoist} if $v_2>v_1$; an \textit{altruist} if $v_2<v_1$; a \textit{balanced player} if $v_1=v_2(=2)$. This is illustrated in Figure \ref{e_a}.
\begin{figure}\label{e_a}
  \centering
  \includegraphics[width=0.8\textwidth]{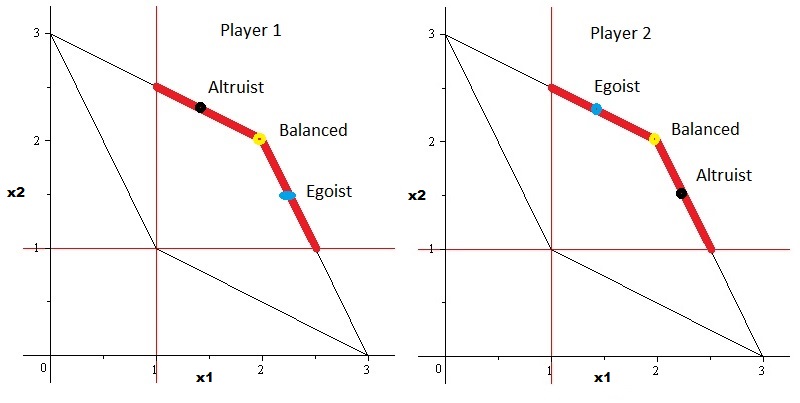}
   \caption{The models of the players' behaviour}
\end{figure}

If both players choose the same point $v=(v_1,v_2)\in u(\mathcal{C}_\beta(G))$ to determine their semi-cooperative strategies, then the strategy profile is a Nash equilibrium. We obtain  the following result which is similar to Smale's one.

\begin{Theorem}\label{twlikeSmale}
 Suppose that both players play semi-cooperative strategies $s_1^{v,\epsilon_1}, s_2^{v,\epsilon_2}$ determined by the same $v\in u(\mathcal{C}_\beta(G))\:\backslash\:$ $ \{(1,2.5),$ $(2.5,1)\}$ and positive constants $\varepsilon_1,\,\varepsilon_2>0$. If $(\bar{x}_t)_{t\geq 1}$ is an arbitrary trajectory of the dynamical system determined by the strategy profile $(s_1^{v,\epsilon_1}, s_2^{v,\epsilon_2})$, then
\[\displaystyle
\overline{x}_t \xrightarrow[t\rightarrow \infty]{} v.
\]

 If player 1 plays the semi-cooperative strategy $s_1^{v,\epsilon_1}$ and player 2 plays an arbitrary memory strategy $s_2$ then an arbitrary trajectory $(\bar{x}_t)_{t\geq 1}$ of the dynamical system determined by the strategy profile $(s_1^{v,\epsilon_1}, s_2)$ satisfies:
\[\displaystyle
\liminf_{t\rightarrow \infty } \overline{x}^1_t \geqslant 1
\qquad \mbox{ and }\qquad
\limsup_{t\rightarrow \infty } \overline{x}^2_t \leqslant v_2.
\]
\end{Theorem}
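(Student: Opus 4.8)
The plan is to prove the one‑sided estimates first — they are simpler and feed into the convergence statement — and then to reduce the convergence claim to a statement about invariant and absorbing neighbourhoods of $v$ that can be settled with the lemmas of Section \ref{seclematy}.

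\textbf{The one-sided estimates.} The decisive observation is that in $K_1(v,\varepsilon_1)=(T_1(v))^{\varepsilon_1}\cap\{x\in\mathfrak S:\,x_1\ge 1,\ x_2\le v_2\}$ the two ``hard'' constraints $x_1\ge 1$ and $x_2\le v_2$ are \emph{not} enlarged by the $\varepsilon_1$-neighbourhood, so $K_1(v,\varepsilon_1)\subset\{x_1\ge 1\}\cap\{x_2\le v_2\}$ no matter how large $\varepsilon_1$ is. Hence, whenever $\overline x_t^{\,2}>v_2$ we have $\overline x_t\notin K_1(v,\varepsilon_1)$, so $s_1^{v,\varepsilon_1}(\overline x_t)=D$ and the realized payoff $x_{t+1}=u(s(\overline x_t))$ is $(3,0)$ or $(1,1)$; in particular $x_{t+1}^{\,2}\le 1\le v_2$. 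Applying Corollary \ref{lBC} to $a_n=x_n^{\,2}$ with $c=v_2$ gives $\limsup_t\overline x_t^{\,2}\le v_2$. Symmetrically, whenever $\overline x_t^{\,1}<1$ player $1$ defects, so $x_{t+1}^{\,1}\in\{3,1\}$ and hence $x_{t+1}^{\,1}\ge 1$; Corollary \ref{lBC} applied to $a_n=-x_n^{\,1}$ with $c=-1$ yields $\liminf_t\overline x_t^{\,1}\ge 1$. This proves the second assertion. Interchanging the roles of the two players gives, for a player $2$ using $s_2^{v,\varepsilon_2}$, the mirror bounds $\liminf_t\overline x_t^{\,2}\ge 1$ and $\limsup_t\overline x_t^{\,1}\le v_1$.

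\textbf{Reduction of the convergence claim.} Assume now that both players use the same $v$. Combining the second assertion with its mirror, for every $\delta>0$ the trajectory eventually enters and stays in the $\delta$-neighbourhood of the rectangle $R:=[1,v_1]\times[1,v_2]$; so it suffices to analyse $\beta^s$ on $R^\delta$ and to prove that $v$ possesses a base of neighbourhoods that are simultaneously invariant and absorbing for $\beta^s$, whence $\overline x_t\to v$. On $R^\delta$ the map $f^s$ is piecewise constant and governed by the line $\ell$ through $(1,1)$ and $v$ (note $\ell$, $\{x_1=v_1\}$ and $\{x_2=v_2\}$ all pass through $v$), its two $\varepsilon_i$-bands, and the lines $x_1=v_1$, $x_2=v_2$: more than $\varepsilon_2$ below $\ell$, $f^s\equiv(0,3)$; inside the $\varepsilon_i$-band of $\ell$ and inside $R$, $f^s\equiv(2,2)$; more than $\varepsilon_1$ above $\ell$ but still with $x_1\le v_1$ and $x_2\ge 1$, $f^s\equiv(3,0)$; in the quadrant $\{x_1>v_1,\ x_2>v_2\}$, $f^s\equiv(1,1)$.

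\textbf{Peeling off the outer regions.} I would now use Lemma \ref{lemwyjscie} to show that the outer pieces of $R^\delta$ are escape sets. The set of points lying more than $\delta$ below $\ell$ is sent by $f^s$ into $\{(0,3)\}$ together with finitely many neighbouring corner values, all of which lie on the far side of, and at positive distance from, the line parallel to $\ell$ at distance $\delta$; Lemma \ref{lemwyjscie} then applies. Symmetrically the set of points more than $\delta$ above $\ell$, and the set $\{x_1\ge v_1+\delta\}\cup\{x_2\ge v_2+\delta\}$ (on which the attained corner values have the relevant coordinate $\le v_i$, or equal $(1,1)$), are escape sets. Feeding these escape sets, together with the invariance of the complementary core sets (which follows in each case from the explicit form of $f^s$ above and the small-step estimate (\ref{malykrok1})), into Lemma \ref{lem3zbiory} — and, at the final stages, into Lemma \ref{lemzbiorB} — produces a decreasing family of invariant and absorbing neighbourhoods first of the chord $[(1,1),v]$ and then of $v$ itself. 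All of this rests ultimately on Corollary \ref{lemuwypuk}, i.e.\ on Blackwell approachability, Proposition \ref{lemBlackwell}.

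\textbf{The final step and the main obstacle.} What remains is to squeeze the trajectory onto $v$ itself. Near $v$ the four quadrants determined by $x_1=v_1$ and $x_2=v_2$ carry the values $(1,1)$ (upper right), $(3,0)$ (upper left), $(0,3)$ (lower right) and $(2,2)$ (lower left, close to $\ell$), so the convexified field $f^s(\cdot)-(\cdot)$ circulates around $v$ with no net outward component, and the $1/(t+1)$ step sizes then force $\mathrm{dist}(\overline x_t,v)\to 0$. Concretely, for each small $\rho>0$ one applies Lemma \ref{lem3zbiory} with $Z$ a suitably shaped neighbourhood of $v$ (a small polygon cut out by segments of $\ell$, $\{x_1=v_1\}$, $\{x_2=v_2\}$ and one auxiliary line), $A$ and $C$ two opposite ``outgoing'' wedges at $v$ — escape sets by Lemma \ref{lemwyjscie}, since there $f^s$ points strictly across a fixed line through $v$ — $B$ the remaining part, and $W$ a convex set with $f^s(B\cup C)\subset W$ and $W^\delta\cap(B\setminus Z)=\emptyset$; letting $\rho\to 0$ finishes the proof. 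The genuine difficulty is exactly this last piece of geometric bookkeeping: the admissible choices of $A,B,C,Z,W$ and of $\delta$ depend on whether player $1$ is an egoist ($v_1>v_2$), an altruist ($v_1<v_2$) or balanced ($v=(2,2)$, Smale's case), because the position of the corner $(2,2)$ relative to $\ell$ and to $R$ — hence the direction in which $f^s\equiv(2,2)$ pushes the trajectory — changes across these cases; each must be treated separately, and verifying the hypotheses $f^s(B\cup C)\subset W$ and $W^\delta\cap(B\setminus Z)=\emptyset$ of Lemma \ref{lem3zbiory} is where the real work lies.
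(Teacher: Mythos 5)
Your proof of the second assertion is complete and is essentially the paper's argument: since the $\varepsilon_1$-enlargement in (\ref{zbiorkooperacji1}) is applied only to $T_1(v)$, one has $K_1(v,\varepsilon_1)\subseteq\{x_1\geq 1\}\cap\{x_2\leq v_2\}$, so $\overline{x}^2_t>v_2$ forces player 1 to defect, the next-stage payoff of player 2 is $0$ or $1\leq v_2$, and Corollary \ref{lBC} applies; likewise for $\liminf_t\overline{x}^1_t\geq 1$. Your reduction of the convergence claim to a neighbourhood of the rectangle $[1,v_1]\times[1,v_2]$ and your description of the piecewise-constant field $f^s$ there are also correct.

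The gap is in the convergence proof itself. You correctly name the strategy (a shrinking family of invariant and absorbing neighbourhoods of $v$, assembled with Lemmas \ref{lemwyjscie}, \ref{lem3zbiory}, \ref{lemzbiorB} and Corollary \ref{lemuwypuk}), but you stop exactly where the proof actually lives: you never exhibit a neighbourhood of $v$ that is provably invariant, and you concede that verifying $f^s(B\cup C)\subset W$ and $W^\delta\cap(B\setminus Z)=\emptyset$ is ``where the real work lies.'' Moreover, the polygon you propose --- cut out by segments of $\ell$, $\{x_1=v_1\}$, $\{x_2=v_2\}$ and one auxiliary line, i.e.\ essentially by lines through $v$ --- is not the right shape: a point of such a polygon lying in the region where, say, $f^s=(3,0)$ is moved by $\beta^s_t$ along the segment toward $(3,0)$, and nothing prevents that step from crossing a boundary line that does not contain $(3,0)$; the small-step estimate (\ref{malykrok}) alone cannot rescue all four sides at once. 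The device that makes the paper's proof work is to bound $O_\delta(v)$ by the lines $l(P_\delta,(3,0))$, $l(P_\delta,(0,3))$ and $l(R_\delta,(2,2))$ with $P_\delta=(v_1-\delta,v_2-\delta)$ --- lines anchored at the image points of $f^s$ --- so that each half-plane $e(l_i)$ is preserved exactly by a step toward the corresponding image point, and only the single remaining constraint $x_1\leq v_1+\delta$ needs the small-step estimate; absorption then follows from Lemma \ref{lem3zbiory} with $V=\mathrm{conv}\{(2,2),(0,3),(3,0)\}$ and $W=\mathrm{conv}\{(2,2),(0,3)\}$. Without this (or an equivalent) explicit construction the first assertion is not proved.
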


\begin{proof}
Fix $v\in u(\mathcal{C}_\beta(G))\:\backslash\:$ $ \{(1,2.5),$ $(2.5,1)\}$ and $\varepsilon_1,\,\varepsilon_2>0$.
Let $(\bar{x}_t)_{t\geq 1}$ be a trajectory of the dynamical system $\beta^{s^v}$, where $s^{v}\:=\:(s_1^{v,\epsilon_1}, s_2^{v,\epsilon_2}) $ and $f^v=u\circ s^v$.

 Let
\begin{displaymath}
\begin{array}{lcl}
\Delta & := & K_1(v,\epsilon_1)\cap K_2(v,\epsilon_2)
\end{array}
\end{displaymath}
\begin{displaymath}
\begin{array}{lcl}
\Omega_1 & := & \mathfrak{S} \backslash K_1(t_1,\epsilon_1)\\
\Omega_2 & := & \mathfrak{S} \backslash K_2(t_2,\epsilon_2)
\end{array}
\end{displaymath}
The set $\Delta$ is convex; the sets $\Delta$, $\Omega_1$, $\Omega_1$ are pairwise disjoint and $\mathfrak{S}\:=\:\Delta\:\cup\:\Omega_1\:\cup\:\Omega_1$. This situation is illustrated in Figure $\ref{tosamot}$.
\begin{figure}[!ht]
  \centering
    \includegraphics[width=0.6\textwidth]{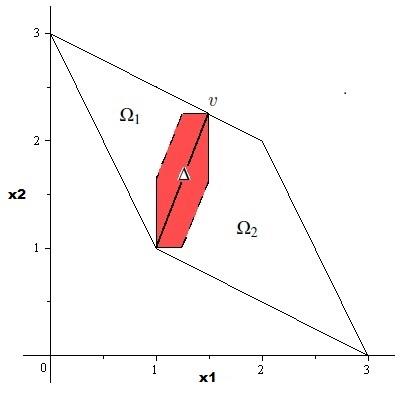}
  \caption{The partition of $\mathfrak{S}$}
\label{tosamot}
\end{figure}

Observe that
\begin{equation}\label{dynamika1}
f^{v}\left(\overline{x}_t\right)=\left\{
\begin{array}{lcl}
(2,2) & \mbox{ if } & \overline{x}_t\in \Delta \\
(3,0) & \mbox{ if } & \overline{x}_t\in \Omega_1 \\
(0,3) & \mbox{ if } & \overline{x}_t\in \Omega_2 \\
\end{array}\right..
\end{equation}
To show that the trajectory  $(\bar{x}_t)_{t\geq 1}$ converges to
$v$ we construct an invariant and absorbing neighborhood
$O_{\delta}(v)$ of the point $v$. Fix
$\delta\in\left(0,\min\left\{\epsilon_1, \epsilon_2
\right\}\right)$. We will denote by $l(a,\,b)$ the line going
through the points $a,\,b\in \mathbb{R}^2$. Set
\begin{equation}\label{dowodoznaczenia}
\begin{array}{lcl}
l_1  &=&l(P_{\delta},\,(3,0))\\
l_2  &=&l(P_{\delta},\,(0,3))\\
l_3  &=&l(R_{\delta},\,(2,2))\\
\end{array}
\end{equation}
where $P_{\delta}  = (v_1-\delta,v_2-\delta)$ and $R_{\delta}$ is
the intersection point of $l_2$ and the line $\{x_1=v_1\}$. The
half-plane over (under) a line $l\subset \mathbb{R}^2$ will be
denoted by $e(l)$ ($h(l)$). The neighborhood $O_{\delta}(v)$
illustrated in Figure \ref{rysotoczenie} is given by
\begin{equation}\label{otoczeniey}
O_{\delta}(v):= {e(l_1)} \: \cap \: {e(l_2)} \: \cap \: {e(l_3)}\:\cap\: \{x\in\mathfrak{S}:\;  x_1\leqslant v_1+\delta\}.
\end{equation}
\begin{figure}[!ht]
  \centering
    \includegraphics[width=0.6\textwidth]{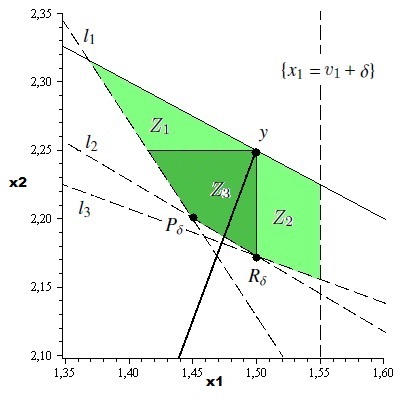}
  \caption{The neighbourhood $O_{\delta}(v)$ }
\label{rysotoczenie}
\end{figure}

To show that the neighbourhood $O_{\delta}(v)$ is invariant we
divide it into three parts: $Z_1:=\Omega_1\cap O_{\delta}(v)$,
$Z_2:=\Omega_2\cap O_{\delta}(v)$,   $Z_3:= \Delta\cap
O_{\delta}(v)$.  By  (\ref{malykrok}) there exists $t_1>1$ such
that for any $t>t_1$
\begin{equation}\label{krok}
|\beta^{s^v}_t(\overline{x})-\overline{x}|<\delta.
\end{equation}
If $\bar{x}\in Z_1$ then $f^v(\bar{x})=(3,0)$. So $\beta^{s^v}_t(\overline{x})\in e(l_1)$. If $t>t_1$ then $\beta^{s^v}_t(\overline{x})\in\{x\in \mathfrak{S}:\,x_1<v_1+\delta,\,x_2>v_2-\delta\}$. Since $e(l_1)\cap \{x\in \mathfrak{S}:\,x_1<v_1+\delta,\,x_2>v_2-\delta\}\subset O_\delta(v)$ we obtain that $\beta^{s^v}_t(\overline{x})\in O_\delta(v)$.\\

If $\bar{x}\in Z_2$ then $f^v(\bar{x})=(0,3)$. So $\beta^{s^v}_t(\overline{x})\in e(l_2)\cap e(l_3)$. If $t>t_1$ then $\beta^{s^v}_t(\overline{x})\in\{x\in \mathfrak{S}:\,v_1-\delta<x_1<v_1+\delta\}$.
Since $e(l_2)\cap e(l_3)\cap \{x\in \mathfrak{S}:\,v_1-\delta<x_1<v_1+\delta\}\subset O_\delta(v)$ we obtain that $\beta^{s^v}_t(\overline{x})\in O_\delta(v)$.\\

If $\bar{x}\in Z_3$ then $f^v(\bar{x})=(2,2)$. So $\beta^{s^v}_t(\overline{x})\in e(l_1)\cap e(l_2)\cap e(l_3)$. If $t>t_1$ then $\beta^{s^v}_t(\overline{x})\in\{x\in \mathfrak{S}:\,x_1<v_1+\delta\}$.
Since $e(l_1)\cap e(l_2)\cap e(l_3)\cap \{x\in \mathfrak{S}:\,x_1<v_1+\delta\}= O_\delta(v)$ we obtain that $\beta^{s^v}_t(\overline{x})\in O_\delta(v)$.\\
So the neighbourhood $O_{\delta}(v)$ is invariant.

 To show that the set $O_{\delta}(v)$ is absorbing we set $V=conv\:f^v(\mathfrak{S})=conv\{(2,2),(0,3),(3,0)\}$ and
\begin{equation}\label{notation1}
A=\Omega_1\cap V^{\delta},\quad  B=\Delta\cap V^{\delta},\quad C=\Omega_2\cap V^{\delta}, \quad Z=O_{\delta}(y), \quad W=conv\{(2,2),(0,3)\}.
\end{equation}
The set  $B\cup C\cup Z$ is invariant  and  $A$ and $C$ are  escape sets. We have  $f^v(B\cup C)\subset W$. There exists a constant $\theta>0$ such that
$W^{\theta} \cap (B\backslash Z)= \emptyset$ (See Figure $\ref{rysoddzielne}$).
\begin{figure}[!ht]
  \centering
    \includegraphics[width=0.6\textwidth]{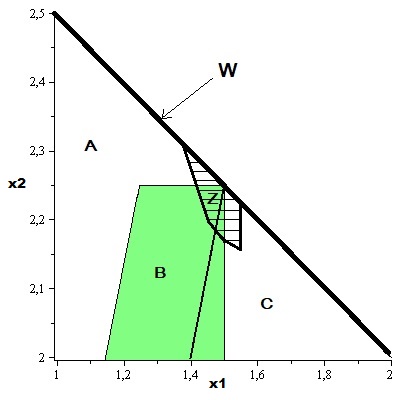}
  \caption{The illustration of the condition $W^{\theta} \cap (B\backslash Z)= \emptyset$}
\label{rysoddzielne}
\end{figure}
By Lemma \ref{lem3zbiory}, we obtain that the set
$ Z\:=\:O_{\delta}(v)$ is absorbing.

Since the diameters of the neighbourhoods $O_{\delta}(v)$ tends to
zero as $\delta\to 0$, then we obtain the convergence of the
trajectory $(\bar{x}_t)$ to the point $v$.

Now, we consider the dynamics of the system when player 2 chooses an arbitrary memory strategy and player 1 plays the semi-cooperative strategy
$s_1^{v,\epsilon_1}$. If $\bar{x}^2_t>v_2$ then player 1 defects in the next stage and so, player's 2 payoff belongs to $\{0,\,1\}$. By Corollary \ref{lBC}, we obtain that
\[
\limsup_{t\rightarrow \infty } \overline{x}^2_t \leqslant v_2.
\]
 The proof of the inequality
 \[
 \liminf_{t\rightarrow \infty } \overline{x}^1_t \geqslant 1
 \]
  is similar.
\end{proof}

If players have no opportunity to agree the choice of the point $v$ then we should not expect that they will choose the same point. We can treat the choice of the point $v$ as an action of a player in a new game. To define payoffs in this new game we have to know the payoffs in the repeated PD when players 1 and 2 play semi-cooperative strategies corresponding to points $a$, $b$, respectively,  in the $\beta$-core. The main result of the paper concerns this situation in the following.

\begin{Theorem}\label{twzbiega}
Let $a=(a_1,a_2),b=(b_1,b_2)\in u(\mathcal{C}_\beta(G))\:\backslash\:$ $ \{(1,2.5),$ $(2.5,1)\}$  and $\varepsilon_1,\varepsilon_2>0$. If $(\bar{x}_t)$ is an arbitrary trajectory of the dynamical system determined by the strategy profile  $(s_1^{a,\varepsilon_1},\,s_2^{b,\varepsilon_2})$, then
\begin{displaymath}
\overline{x}_t\quad \xrightarrow[t\rightarrow \infty]{} \quad \left\{
\begin{array}{lcl}
a & \mbox{ if } & a_1=b_1  \\
(2,2) & \mbox{ if } & a_1 \leqslant 2 \leqslant b_1  \\
b&\mbox{ if } & a_1<b_1\leq 2 \\
a &\mbox{ if } & 2 \leq  a_1<b_1 \\
y^{\varepsilon_1,\varepsilon_2} &\mbox{ if } & b_1<a_1
\end{array}\right.
\end{displaymath}
where $y^{\varepsilon_1,\varepsilon_2}$ is a point in $\mathfrak{S}$ and
\[
y^{\varepsilon_1,\varepsilon_2} \xrightarrow[\varepsilon_1,\varepsilon_2\rightarrow0]{} (1,1).
\]
\end{Theorem}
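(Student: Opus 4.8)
The plan is to prove the theorem case by case, reducing the first case to a result already proved and, in each of the remaining four cases, to follow the very scheme used in the proof of Theorem~\ref{twlikeSmale}: for the claimed limit point $p$ one constructs a one-parameter family $\{O_\delta(p)\}_{\delta>0}$ of neighbourhoods of $p$ with $\operatorname{diam}O_\delta(p)\to0$ as $\delta\to0$, each of which is both invariant and absorbing for the dynamical system $\beta^{s}$ with $s=(s_1^{a,\varepsilon_1},s_2^{b,\varepsilon_2})$; then every trajectory is eventually in every $O_\delta(p)$, whence $\overline{x}_t\to p$. The common first step in all four cases is purely combinatorial: the lines $L_1=l((1,1),a)$ and $L_2=l((1,1),b)$ together with the four box half-planes $\{x_1\ge1\}$, $\{x_2\le a_2\}$ (occurring in $K_1(a,\varepsilon_1)$) and $\{x_1\le b_1\}$, $\{x_2\ge1\}$ (occurring in $K_2(b,\varepsilon_2)$) cut $\mathfrak S$ into finitely many cells, and since $K_1(a,\varepsilon_1)$ and $K_2(b,\varepsilon_2)$ are convex, on the interior of each cell $f^{s}=u\circ(s_1^{a,\varepsilon_1},s_2^{b,\varepsilon_2})$ is one of the constants $(2,2),(3,0),(0,3),(1,1)$, according to which players cooperate; I would write this partition down explicitly at the outset. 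The case $a_1=b_1$ is immediate: since $a,b$ lie on the Pareto frontier, $a_1=b_1$ forces $a=b$, and the claim is exactly the first assertion of Theorem~\ref{twlikeSmale}.

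Next I would identify the limit in each configuration using the elementary fact that the slope of $l((1,1),(c,\cdot))$ along $u(\mathcal C_\beta(G))$ is strictly decreasing in the first coordinate $c$. If $a_1\le2\le b_1$ then $\operatorname{slope}(L_1)\ge1\ge\operatorname{slope}(L_2)$, and one checks directly that $(2,2)\in K_1(a,\varepsilon_1)\cap K_2(b,\varepsilon_2)$, so $f^{s}\equiv(2,2)$ on a neighbourhood of $(2,2)$ (the box edges $\{x_2\le a_2\}$ and $\{x_1\le b_1\}$ meet $(2,2)$ only from the defecting side, so they are harmless); $O_\delta((2,2))$ is then built and analysed exactly as $O_\delta(v)$ was for $v=(2,2)$. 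If $a_1<b_1\le2$ then $L_1$ is strictly steeper than $L_2$, so $\Delta=K_1(a,\varepsilon_1)\cap K_2(b,\varepsilon_2)$ is a nonempty wedge whose extreme cell meets the vertical edge $\{x_1=b_1\}$ precisely at $b=L_2\cap\{x_1=b_1\}$; one first gets $\limsup_t\overline{x}^1_t\le b_1$ from Corollary~\ref{lBC} (whenever $\overline{x}^1_t>b_1$ player~2 defects, hence $x^1_{t+1}\in\{0,1\}\le b_1$), and then near $b$ the balance between the drift towards $(2,2)$ on $\Delta$ and the drift towards $(0,3)$ on $\Omega_2=\mathfrak S\setminus K_2(b,\varepsilon_2)$ pins the trajectory to $b$; this balance is what determines the half-planes defining $O_\delta(b)$. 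The case $2\le a_1<b_1$ is the mirror image, with limit $a$.

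In every case the invariance of $O_\delta(p)$ is verified as in Theorem~\ref{twlikeSmale}: by (\ref{malykrok}) the increment $\beta^{s}_t(\overline{x})-\overline{x}=\frac{f^{s}(\overline{x})-\overline{x}}{t+1}$ is, for $t$ large, an arbitrarily small displacement in the fixed direction $f^{s}(\overline{x})-\overline{x}$, so it suffices that across every half-plane bounding $O_\delta(p)$ the relevant constant value of $f^{s}$ lies on the inner side — a finite, cell-by-cell check. Absorption comes from the tools of Section~\ref{seclematy}: by Corollary~\ref{lemuwypuk} the trajectory eventually enters $V^\delta$ with $V=\overline{\operatorname{conv}}f^{s}(\mathfrak S)$; each defection cell $Z$ has $f^{s}(Z)$ contained in a segment $W$ joining two payoff vertices, and since $(1,1),(3,0),(0,3)$ lie in the open convex set $K_1(a,\varepsilon_1)$ or $K_2(b,\varepsilon_2)$, the compact set $W$ stays a positive distance from the closed set $Z$, so $Z$ is an escape set by Lemma~\ref{lemwyjscie}; feeding these escape sets, together with the invariance of the ``cooperative part plus target'', into Lemma~\ref{lem3zbiory} (or Lemma~\ref{lemzbiorB}) with the roles of $A,B,C,Z,W$ played by the appropriate cells yields that $O_\delta(p)$ is absorbing. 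Letting $\delta\to0$ gives $\overline{x}_t\to p$.

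The genuinely new and hardest case is $b_1<a_1$. Here $L_2$ is strictly steeper than $L_1$, so the two cooperation half-planes are ``crossed'': $\Delta$ is no longer a wedge but a bounded region clustered near $(1,1)$, squeezed between the inner boundary of player~1's cooperation set (the translate of $L_1$ by $\varepsilon_1$ on the defecting side) and the inner boundary of player~2's (the translate of $L_2$ by $\varepsilon_2$), cut further by $\partial\mathfrak S$ and the box edges. The claimed limit $y^{\varepsilon_1,\varepsilon_2}$ is the ``tip'' of this region, namely the intersection of these two translated lines (corrected, when that intersection falls outside, by the relevant piece of $\partial\mathfrak S$ or a box edge). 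I expect the main obstacle to be exactly here: determining $y^{\varepsilon_1,\varepsilon_2}$ precisely, listing the three or four values of $f^{s}$ that occur in every neighbourhood of it, and shaping $O_\delta(y^{\varepsilon_1,\varepsilon_2})$ so that the drift of $\beta^{s}_t$ points inward across every facet — this is where most of the case analysis lives. Once that is done, the invariance/absorption scheme above applies verbatim, giving convergence, and the final assertion $y^{\varepsilon_1,\varepsilon_2}\to(1,1)$ is immediate: as $\varepsilon_1,\varepsilon_2\to0$ the two translated lines tend to $L_1$ and $L_2$, whose only common point is $(1,1)$.
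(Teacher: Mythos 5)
Your proposal follows essentially the same route as the paper: the same case split on the relative position of $a_1$ and $b_1$, the same identification of the limits (including $y^{\varepsilon_1,\varepsilon_2}$ as the tip $cl(\Delta)\cap cl(\Omega_3)$ of the cooperation lens when $b_1<a_1$), and the same mechanism of shrinking invariant-and-absorbing neighbourhoods verified through Corollary \ref{lemuwypuk} and Lemmas \ref{lem3zbiory}, \ref{lemzbiorB} and \ref{lemwyjscie}. The one place where you stop at a plan --- the explicit construction of $O_\delta(\Delta)$ and $O_\delta(y^{\varepsilon_1,\varepsilon_2})$ and the facet-by-facet invariance check in the case $b_1<a_1$ --- is precisely the case the paper works out in detail, and your description of what must be done there matches what the paper actually does.
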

The case $a=b$ was considered in Theorem $\ref{twlikeSmale}$.

\begin{proof}
Set $s^{*}\:=\:(s_1^{a,\epsilon_1}, s_2^{b,\epsilon_2}) $, $f^*=u\circ s^*$ and
\begin{equation}\label{dobordelta}
\delta_0:=\left\{
\begin{array}{lcl}
\min\left\{\frac{\epsilon_1}{4},\frac{\epsilon_2}{4}, \frac{||a-b||}{4}\right\} & \mbox{if} & a_1 < b_1\leqslant 2 \mbox{ or } 2\leqslant a_1 < b_1 \\
\min\left\{\frac{\epsilon_1}{4},\frac{\epsilon_2}{4}, \frac{||a-(2,2)||}{4},\frac{||b-(2,2)||}{4}\right\} & \mbox{if} & a_1 \leqslant 2 \leqslant b_1  \\
\min\left\{\frac{\epsilon_1}{4},\frac{\epsilon_2}{4}, \frac{||y^{\varepsilon_1,\varepsilon_2}-(1,1)||}{4}\right\} & \mbox{if} & b_1> a_1
\end{array}\right.
\end{equation}
For $\delta\in (0,\delta_0)$ there exists $t_\delta>1$ such that the condition $(\ref{krok})$ is satisfied for  $t>t_\delta$ .\\
We denote by $\beta^*$ the dynamical system determined by the strategy $s^*$, i.e. $\beta^*_t(\bar{x})=\frac{t\bar{x}+f^*(\bar{x})}{t+1}$.
Set
\begin{equation}\label{zbiordelta}
\begin{array}{lcl}
\Delta & := & K_1(a,\epsilon_1)\cap K_2(b,\epsilon_2)\\
\Omega_1 & := & K_2(b,\epsilon_2) \backslash \Delta\\
\Omega_2 & := & K_1(a,\epsilon_1) \backslash \Delta \\
\Omega_3 & := & \mathfrak{S} \backslash (K_1(a,\epsilon_1) \cup K_2(b,\epsilon_2))
\end{array}.
\end{equation}
If $a_1\leq b_1$ then $\Omega_3=\emptyset$. The case $a_1=b_1$ has been proved in Theorem \ref{twlikeSmale}. \\
In the case $a_1< 2\leq b_1$ we
set $V=conv\{(0,\,3),\,(2,\,2),\,(3,\,0)\}$ and observe that $V^\varepsilon\cap \Delta$ is absorbing (comp. Corollary \ref{lemuwypuk}). Since $f^*(x)=(2,\,2)$ for $x\in V^\varepsilon\cap \Delta$ and $(2,\,2)\in V^\varepsilon\cap \Delta$ then every trajectory is convergent to $(2,\,2)$. \\
The case $a_1\leq 2<b_1$ is symmetric to the above one.

The case $a_1<b_1<2$ is illustrated in Figure $\ref{rysegoaltr}$.
\begin{figure}[!ht]
  \centering
    \includegraphics[width=0.6\textwidth]{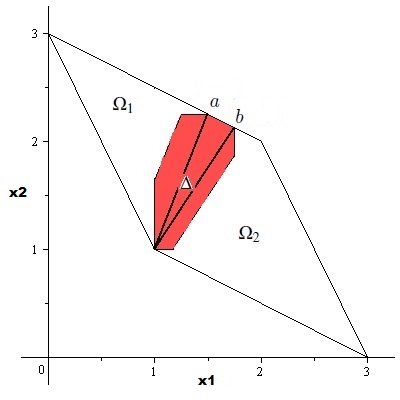}
  \caption{The partition of $\mathfrak{S}$ when $a_1<b_1<2$}
\label{rysegoaltr}
\end{figure}

We  construct two neighbourhoods of the point $b$:
\[
O_{\delta}(b):={e(l_1})\:\cap\:{e(l_2)}\:\cap\: {e(l_3)}\:\cap\:
\{x\in \mathfrak{S};\,  x_1\leqslant b_1+\delta\}.
\]
\[
U_{\delta}(b):= O_\delta(b)\:\cap \:\{x\in \mathfrak{S}:\,
b_1-\delta \leqslant x_1\}.
\]
where the lines $l_1,\,l_2,\,l_3$ are given by
(\ref{dowodoznaczenia}) for  $P_{\delta}  =
(a_1-\delta,a_2-\delta)$ and $R_{\delta}$ being the intersection
point of $l_2$ and the line $\{x_1=b_1\}$. Invariance of the sets
$U_\delta(b)$, $O_\delta(b)$ is a conclusion from its construction
(see Figure \ref{otoczU}).
\begin{figure}[!ht]
  \centering
    \includegraphics[width=0.6\textwidth]{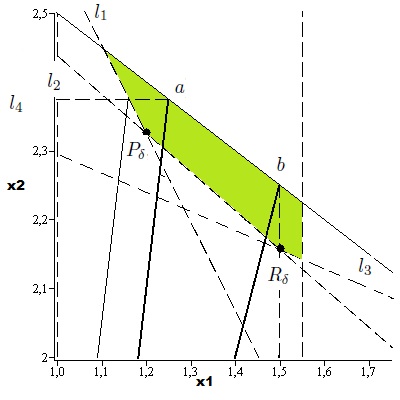}
  \caption{The neighbourhoods $O_\delta(b)$}
\label{otoczU}
\end{figure}
By Lemma \ref{lem3zbiory}, we obtain that the set $O_\delta(b)$ is
absorbing using the same notation as in (\ref{notation1}). To
obtain that $U_\delta(b)$ is absorbing we apply Lemma \ref{lemzbiorB}
setting $D=O_\delta(b)$, $Z=U_\delta(b)$, $B=D\setminus Z$,
$W=conv\{(3,\,0),\,(2,\,2)\}$.

The case $2< a_1 < b_1$ is symmetric to the case
 considered above.

 The last situation is $b_1<a_1$. Two possible choices of $a$ and $b$ are presented in Figure \ref{zb11}.
\begin{figure}[!ht]
  \centering
    \includegraphics[width=0.8\textwidth]{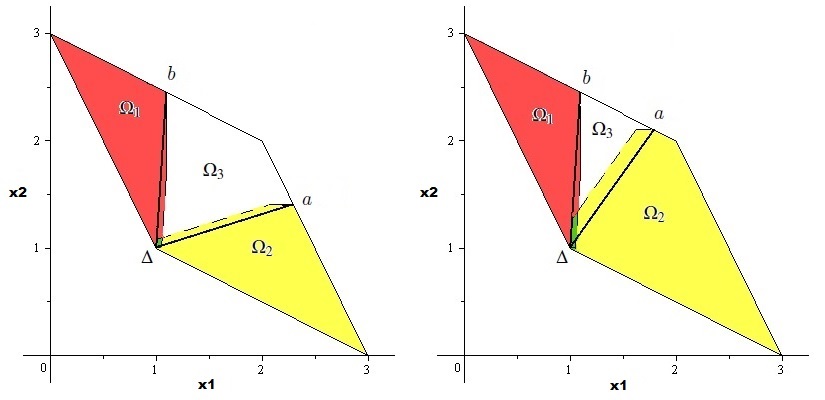}
  \caption{On the left -- the partition of $\mathfrak{S}$ when both players are egoists and on the right -- the partition of $\mathfrak{S}$ when $b_1<a_1$ and player 1 is the altruist and the player 2 is the egoist }
\label{zb11}
\end{figure}
Obviously, we have
\begin{displaymath}
f^*(\overline{x})=\left\{
\begin{array}{lcl}
(2,2) & \mbox{ if } & \overline{x}\in \Delta \\
(3,0) & \mbox{ if } &  \overline{x}\in \Omega_1 \\
(0,3) & \mbox{ if } &  \overline{x}\in \Omega_2 \\
(1,1) & \mbox{ if } &  \overline{x}\in \Omega_3 \\
\end{array}\right..
\end{displaymath}
The limit of an arbitrary trajectory is a point
$y=y^{\varepsilon_1\,\varepsilon_2}$ which is the intersection
$cl(\Delta) \:\cap\: cl(\Omega_3)$. Let $P_{\delta}\in \Omega_3$
be the unique point satisfying $dist(P_{\delta},\Omega_1)=
dist(P_{\delta},\Omega_2)=\delta$. Set
\begin{equation}\label{dowodoznaczenia2}
\begin{array}{l}
l_1 =l(P_{\delta},\,(0,\,3))\\
l_2 =l(P_{\delta},\,(3,\,0))\\
l_3 =l((1,\,1),b)\\
l_4=l((1,\,1),a)\\
l_5=l(P_\delta,y^{\varepsilon_1\,\varepsilon_2})
\end{array}
\end{equation}
 We construct the neighbourhood of the
set $\Delta$:
\begin{displaymath}
O_{\delta}(\Delta):= \Delta^{\delta} \:\cup \:(h(l_1)\:\cap\:
h(l_2)\:\cap\: h(l_3)\:\cap\: e(l_4))
\end{displaymath}
which is illustrated in Figure $\ref{otoczeniedelty}$.

\begin{figure}[!ht]
  \centering
    \includegraphics[width=0.6\textwidth]{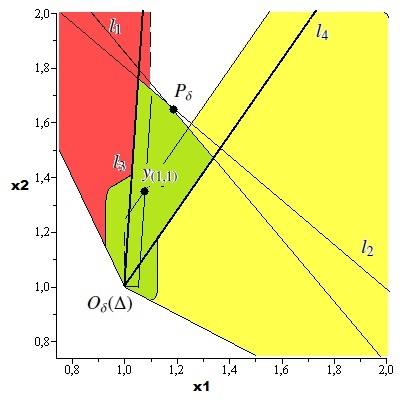}
  \caption{The neighborhood $O_{\delta}(\Delta)$}
\label{otoczeniedelty}
\end{figure}

We show that $O_{\delta}(\Delta)$ is invariant. Fix $\bar{x}=(x_1,\,x_2)\in O_\delta(\Delta)$ and $t>t_\delta$. Set $C:=h(l_1)\cap h(l_2)\cap h(l_3)\cap e (l_4)$.\\
Consider the case $\bar{x}\in\Omega_1$ and $x_2>y_2$. Then $\bar{x}$ and $f^*(\bar{x})=(3,\,0)$ belong to $h(l_2)\cap h(l_3)$. So $\beta^*_t(\bar{x}) \in h(l_2)\cap h(l_3)$. Since $dist(\bar{x},h(l_4))\geq\varepsilon_1>\delta$ then $dist(\beta^*_t(\bar{x}),\,h(l_4))>0$ and so $\beta^*_t(\bar{x})\in e(l_4)$. Since $dist(\bar{x},\,e(l_3))<\varepsilon_2$ then $dist(\beta^*_t(\bar{x}),\,e(l_3))<\varepsilon_2+\delta$. We have $\{z\in \mathfrak{S}:\;dist(z,e(l_3))<\varepsilon_2+\delta\}\cap h(l_2)\subset h(l_1)$. So $\beta^*_t(\bar{x})\in h(l_1)$. Thus we obtain that $\beta^*_t(\bar{x})\in C\subset O_\delta(\Delta)$.\\
If $\bar{x}\in\Omega_1$ and $x_2\leqslant y_2$ then $\beta^*_t(\bar{x})\in O_\delta(\Delta)$.\\
The case $\bar{x}\in\Omega_2$ is symmetric to the above one.\\
In the case $\bar{x}\in\Omega_3$ observe that $\bar{x}$ and $f^*(\bar{x})=(1,\,1)$ belong to the convex set $C$. Thus $\beta^*_t(\bar{x})\in C$.\\
In the last case $\bar{x}\in\Delta$, we have $\beta^*_t(\bar{x})\in\Delta^\delta$.

To obtain that $O_{\delta}(\Delta)$ is absorbing we set

\begin{equation}\label{dowodoznaczenia4}
\begin{array}{lcll}
A & := & e(l_5) \:\backslash\: O_{\delta}(\Delta) \:\subset \:\Omega_1\cup\Omega_3, \\
B & := & (h(l_5)\:\cap\: \Omega_3)\:\backslash\: O_{\delta}(\Delta), \\
C & := & \Omega_2 \:\backslash \:O_{\delta}(\Delta),\\
Z & := & O_{\delta}(\Delta),\\
V & := &\mathfrak{S},\\
W & := & conv\{(1,1),(0,3)\}
\end{array}
\end{equation}
and we apply Lemma \ref{lem3zbiory}.
The above sets are illustrated in the Figure $\ref{podzial}$.
\begin{figure}[!ht]
  \centering
    \includegraphics[width=0.6\textwidth]{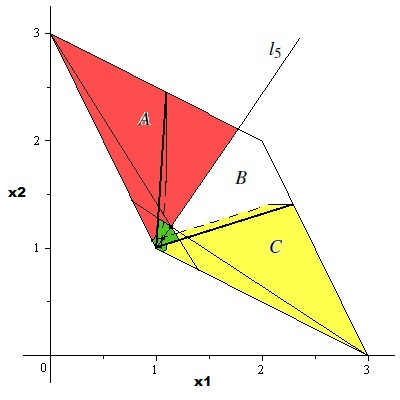}
  \caption{The sets $A$, $B$ and $C$ in one possible situation when $b_1<a_1$}
\label{podzial}
\end{figure}
The set $B\cup C\cup Z$ is invariant  and the sets $A$, $C$ are escape sets.
We have $f^*(B\cup C)\subset
W$. The neighbourhood $O_{\delta}(\Delta)$ is defined in such way
that there exists a constant  $\theta>0$ such that
\[
W^{\theta} \cap (B\backslash Z)=W^{\theta} \cap B = \emptyset.
\]
By Lemma \ref{lem3zbiory}, the neighbourhood $O_{\delta}(\Delta)$ is absorbing.

We define the neighbourhood $O_{\delta}(y)$ of the point $y$ by
\[
O_{\delta}(y) := O_{\delta}(\Delta) \cap e(l_6),
\]
where $l_6$ is the line given by the equation: $x_1+x_2=y_1+y_2-\delta$.\\
Using similar arguments as in the proof of the invariance of $O_\delta(\Delta)$ we  show that $O_{\delta}(y)$ is invariant.
Set $D=O_{\delta}(\Delta)$, $Z=O_{\delta}(y)$, $B=D\setminus Z$ and $W=conv\{(0,\,3),\,(2,\,2),\,(3,\,0)\}$. By Lemma \ref{lemzbiorB}, we obtain that $O_{\delta}(y)$ is absorbing.\\
Since the diameter of  $O_\delta(y)$ tends to zero as $\delta\to 0$, then $y$ is the limit of an arbitrary trajectory in the considered case.

Using elementary calculations we obtain that the distance between the point $y^{\varepsilon_1\,\varepsilon_2}$ and the point $(1,\,1)$ equals to
\[
\frac{\varepsilon_1+\varepsilon_2+\sqrt{(\varepsilon_1+\varepsilon_2)^2+4\varepsilon_1\varepsilon_2\tan^2\alpha}}{2\tan\alpha},
\]
where $\alpha$ denotes the angle between lines $l((1,\,1),\,a)$ and $l((1,\,1),\,b)$.
\end{proof}



\end{document}